\newtheorem{theorem}{Theorem}[section]
\newtheorem{corollary}[theorem]{Corollary}
\newtheorem{lemma}[theorem]{Lemma}
\newtheorem{question}[theorem]{Question}
\newtheorem{proposition}[theorem]{Proposition}
\newtheorem{conjecture}[theorem]{Conjecture}
\newtheorem{definition}[theorem]{Definition}
\newtheorem{maintheorem}{Theorem}
\theoremstyle{definition}
\theoremstyle{remark}
\newtheorem{step}{Step}
\newtheorem{case}{Case}
\numberwithin{equation}{section}
\numberwithin{figure}{section}
\newcommand{\Z}{\mathbb Z}
\newcommand{\T}{\mathbb T}
\newcommand{\eps}{\varepsilon}
\newcommand{\U}{\mathcal U}
\newcommand{\diff}{\operatorname{Diff}}
\newcommand{\phc}{\operatorname{Phc}}
\newcommand{\transv}{\pitchfork}
\newcommand{\zeroeq}{\stackrel{\circ}{=}}
\newcommand{\ess}[1]{\overline{#1}^{ess}}
\DeclareMathOperator{\per}{per}
\DeclareMathOperator{\nuh}{Nuh}
\DeclareMathOperator{\pper}{Per}
\newcommand{\eval}[2][\right]{\relax
  \ifx#1\right\relax \left.\fi#2#1\rvert}
\begin{document}
\title[Stable minimality]{Stable minimality of expanding foliations}

\author[Gabriel N\'u\~nez]{Gabriel N\'u\~nez}
\address{Gabriel N\'u\~nez, 1. IMERL, Facultad de Ingenier\'ia,
Universidad de la Rep\'ublica,
Julio Herrera y Reissig 565,
11.300 Montevideo, Uruguay.}
\address{2. Departamento de matem\'atica, 
Facultad de Ingenier\'ia y Tecnolog\'ias,
Universidad Cat\'olica del Uruguay, 
Comandante Braga 2715, 
11.600 Montevideo, Uruguay
}
\email{fnunez@fing.edu.uy}
\author[Jana Rodriguez Hertz]{Jana Rodriguez Hertz}
\address{Jana Rodriguez Hertz, 1.
Department of Mathematics, 
Southern University of Science and Technology of China.
No 1088, xueyuan Rd., Xili, Nanshan District, Shenzhen,Guangdong, China 518055.}
\address{2. SUSTech International Center for Mathematics}
\email{rhertz@sustech.edu.cn}

\begin{thanks}{GN  was supported by Agencia Nacional de Investigaci\'on e Innovaci\'on. The research that gives rise to the results presented in this publication received funds from the Agencia Nacional de Investigaci\'on e Innovaci\'on under the code POS\_NAC\_2014\_1\_102348}
\end{thanks}
\begin{thanks}
{JRH was supported by NSFC 11871262 and NSFC 11871394} 
\end{thanks}
\keywords{ Minimal foliation, stable minimality, stable ergodicity}

\begin{abstract}
We prove that generically in $\diff^{1}_{m}(M)$, if an expanding $f$-invariant foliation $W$ of dimension $u$ is minimal and there is a periodic point of unstable index $u$, the foliation is stably minimal. By this we mean there is a $C^{1}$-neighborhood $\U$ of $f$ such that for all $C^{2}$-diffeomorphisms $g\in \U$, the $g$-invariant continuation of $W$ is minimal. In particular, all such $g$ are topologically mixing. Moreover, all such $g$ have a hyperbolic ergodic component of the volume measure $m$ which is essentially dense. This component is, in fact, Bernoulli.\par
We provide new examples of stably minimal diffeomorphisms which are not partially hyperbolic. 
\end{abstract}
\maketitle

\section{Introduction}
In this paper, we look for mechanisms activating the {\em stable minimality} of an expanding invariant foliation. From now on, let $M$ be a closed Riemannian manifold, and let $f$ be a $C^{1}$-diffeomorphisms in $M$ preserving a smooth volume $m$. An $f$-invariant foliation is {\em expanding} if it is tangent to a $Df$-invariant sub-bundle $E$ of the tangent bundle $TM$ such that $\|Df(x)v\|>1$ for all unit vectors $v\in E_{x}$, for every $x\in M$. A foliation is {\em minimal} if every leaf of the foliation is dense. \par
An $f$-invariant foliation $W$ is {\em stably minimal} if there exists a $C^{1}$-neighbor\-hood $\U(f)$ of $f$ in $\diff^{1}_{m}(M)$ such that 
\begin{enumerate}
 \item For each $g\in\U$ there exists a $g$-invariant foliation $W_{g}$ such that the fiber bundle $TW_{g}$ varies continuously for $g\in\U(f)$, where $W_{f}=W$
 \item $W_{g}$ is minimal for all $g\in \U(f)\cap\diff^{2}_{m}(M)$
\end{enumerate}
With this definition, a stably minimal $f$-invariant foliation could be not minimal. However, if $f\in\diff^{2}_{m}(M)$, every stably minimal $f$-invariant foliation is minimal. Note that minimality of an invariant foliation is a $G_{\delta}$-property under condition (1) above; hence, the generic stably minimal $f$-invariant foliation will be minimal, even if $f$ is only $C^{1}$.\par
We obtain the following result:

\begin{maintheorem}\label{teo.stable.minimality} For a generic $f\in \diff^{1}_{m}(M)$, if $W$ is a minimal expanding $f$-invariant foliation, and there exists a hyperbolic periodic point $p$ with unstable index $u(p)=\dim W$, then $W$ is stably minimal. In particular, all $C^{2}$-volume preserving diffeomorphisms in a $C^{1}$-neighborhood of $f$ are topologically mixing.
\end{maintheorem}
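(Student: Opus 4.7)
The plan is to first identify the structure of the leaf through $p$ at $f$, then propagate minimality to nearby diffeomorphisms using the hyperbolic continuation of $p$ together with genericity and the $C^{2}$-hypothesis.

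I would begin by showing that the leaf $W(p)$ of the expanding foliation through the periodic point coincides with the unstable manifold $W^{u}(p)$. Since $W$ is tangent to an expanding subbundle and $\dim W = u(p)$, the tangent space $T_{p}W(p) = E_{p}$ is a $Df^{\per(p)}$-invariant subspace of dimension $u(p)$ on which the derivative expands; by hyperbolicity of $p$ and maximality of this dimension, $E_{p}$ must equal the unstable subspace $E^{u}_{p}$. Uniqueness of unstable manifolds at hyperbolic periodic points then gives $W(p)=W^{u}(p)$, so minimality of $W$ yields $\overline{W^{u}(p)} = M$.

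Next I would establish approximate density in a $C^{1}$-neighborhood of $f$. The hyperbolic continuation $p_{g}$ exists for $g$ near $f$ with the same index, and finite iterates of $W^{u}_{\mathrm{loc}}(p_{g})$ depend continuously on $g$ in the $C^{1}$ sense. Given $\eps>0$, cover $M$ by finitely many balls $B(x_{i},\eps)$; density of $W^{u}(p)$ at $f$ supplies, for each $i$, a compact piece $f^{n_{i}}(D_{i}) \subset W^{u}(p)$ meeting $B(x_{i},\eps)$ with $D_{i}\subset W^{u}_{\mathrm{loc}}(p)$. Intersecting the finitely many open conditions yields a $C^{1}$-neighborhood $\U_{\eps}$ on which $W^{u}(p_{g})$ is $2\eps$-dense. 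A priori $\U_{\eps}$ shrinks with $\eps$, so this only produces approximate density.

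The central difficulty is promoting approximate density to genuine density in a \emph{fixed} $C^{1}$-neighborhood $\U$. Here I would first invoke that the set-valued map $g \mapsto \overline{W^{u}(p_{g})}$ is lower semicontinuous in Hausdorff distance, hence its continuity points form a residual set; the generic hypothesis on $f$ places $f$ among these continuity points, so $\overline{W^{u}(p_{g})} \to M$ as $g \to f$, giving $\eps$-density in a single neighborhood $\U$ independent of $\eps$. For $g \in \U \cap \diff^{2}_{m}(M)$, the closure $K_{g} = \overline{W^{u}(p_{g})}$ is $W_{g}$-saturated by continuity of the bundle $TW_{g}$ and convergence of plaques at nearby basepoints; moreover, the $C^{2}$-hypothesis provides bounded distortion along $W_{g}$, so an iteration-and-transversality argument starting from a small disc transverse to the complementary direction inflates any local intersection with $W^{u}(p_{g})$ into a subset of $K_{g}$ with nonempty interior. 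Since the $W_{g}$-saturation of an open set is open, $K_{g}$ is clopen and nonempty, hence equal to $M$. The step I expect to be the main obstacle is this nonempty interior claim: approximate density alone does not deliver it, and the argument must combine $C^{2}$-distortion control, the $\lambda$-lemma for the expanding direction, and the generic transversality of $W^{u}(p_{g})$ with transverse discs. Once $K_{g}=M$ is established, a further application of the $\lambda$-lemma propagates density from $W^{u}(p_{g})$ to every leaf of $W_{g}$, giving minimality of $W_{g}$; topological mixing of $g$ then follows immediately, since every pair of open sets becomes linked through sufficiently many iterates that expand along the dense foliation.
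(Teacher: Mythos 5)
Your proposal breaks down at exactly the point you flag as the obstacle, and the workaround you sketch does not repair it. The continuity/semicontinuity argument for $g\mapsto \overline{W^{u}(p_{g})}$ only yields: for every $\eps>0$ there is a neighborhood $\U_{\eps}$ of $f$ in which $W^{u}(p_{g})$ is $\eps$-dense. Being a continuity point of a lower semicontinuous set-valued map does \emph{not} upgrade this to $\eps$-density for all $\eps$ in a single neighborhood $\U$ independent of $\eps$ --- that conclusion is equivalent to the robust density you are trying to prove, so the step is circular. The subsequent attempt to recover genuine density for $C^{2}$ perturbations ($K_{g}=\overline{W^{u}(p_{g})}$ has nonempty interior via bounded distortion, hence is clopen) is not an argument: approximate density gives no interior, $C^{2}$ distortion control along the expanding foliation does not by itself create interior points of the closure of a single leaf, and even granting nonempty interior, a closed saturated set containing an open saturated set need not be open. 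Nothing in the proposal supplies the mechanism that makes density of $W^{u}(p_{g})$ persist over a fixed $C^{1}$-neighborhood.

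The paper's proof fills precisely this hole with measure-theoretic and blender machinery that your outline never touches. Genericity (via the M\~an\'e--Bochi--Avila--Crovisier--Wilkinson circle of results, Theorem \ref{teo.jana.acw}) produces a hyperbolic periodic point $q$ of \emph{maximal} unstable index with $\phc(q)\zeroeq M$ and an $s$-stable superblender; Theorem \ref{theorem.B} (using the ergodicity criterion of \cite{HHTU11}, the Avila--Bochi estimate $m(\phc_{g}(q_{g}))>1-\eps$, and Lemma \ref{lemma.jana}) then gives $\ess{\phc_{g}(q_{g})}=M$, hence $\overline{W^{u}(q_{g})}=M$, for \emph{all} $C^{2}$ volume-preserving $g$ in a fixed neighborhood. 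If $u(p)=u(q)$ this density transfers to $p_{g}$ by the generic homoclinic relation of same-index periodic points; if $u(p)<u(q)$ (a case your proposal ignores, since you work only with $p$) one needs a cascade of $u$-blenders through periodic points of intermediate indices to force $W^{u}(q_{g})\subset\overline{W^{u}(p_{g})}$. Finally, minimality of $W_{g}$ is not just the $\lambda$-lemma applied to a dense leaf: one also needs the robust property $\phc^{W_{g}}(p_{g})=M$ (Lemma \ref{lemma.phc.persists}) so that every $W_{g}$-leaf transversally meets $W^{s}(p_{g})$, which is what the Minimality Criterion (Proposition \ref{teo.minimality}) packages. Your identification $W(p)=W^{u}(p)$ and the deduction of topological mixing from minimality are fine, but the core of the theorem --- robustness of the density in a fixed neighborhood --- is missing.
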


The hypothesis of the existence of a hyperbolic periodic point with this property may strike as awkward. However, without it we could have a problem as the following, which remains open:
\begin{question} Is the strongest foliation of an Anosov foliation always minimal? 
In other words, let $M$ a closed manifold with $\dim M\geq 3$.  Let us assume that the tangent bundle splits into 3 $Df$-invariant sub-bundles $TM=E^{uu}\oplus E^{u}\oplus E^{s}$, so that for each $v^{\sigma}\in E^{\sigma}$ unit vectors $\sigma=uu,u,s$, we have
$$\|Df(x)v^{s}\|<1<\|Df(x)v^{u}\|<\|Df(x)v^{uu}\|.$$
Then, there exists an $f$-invariant foliation $W^{uu}$ pointwise tangent to $E^{uu}$. Is $W^{uu}$ always minimal? You may add the hypothesis $f\in C^{2}$ if necessary. This is not even known in the case where $\dim E^{u}=1$, or even when $\dim M=3$. Of course, in case $f$ is linear, the answer is always positive. 
\end{question}

We would like to mention some related results. In \cite{BDU2002} it is proven that for 3-dimensional manifolds there is an open and dense subset of robustly transitive diffeomorphisms (that is, diffeomorphisms in the $C^{1}$-interior of transitive diffeomorphisms) far away from tangencies so that either the unstable or the stable foliation is robustly minimal. By robustly minimal it is meant that they are minimal in a $C^{1}$-open neighborhood. This result was later generalized in \cite{HHU2007} for robustly transitive partially hyperbolic diffeomorphisms with 1-dimensional center bundle. \par
Another related result is \cite{PuSa2006}. There it is proved the robust minimality of the stable foliation of a partially hyperbolic diffeomorphism $f$ under the following conditions (1) $W^{s}_{f}$ is minimal (2) $f$ satisfies the SH property. The SH property requires that for any unit disc in any unstable leaf $W^{u}_{f}(x)$ there is a point $y$ where the central bundle $E^{c}$ has a uniform expanding behavior along the future orbit of $y$. \par
These three results are stronger in the sense that they hold in a whole $C^{1}$-open set and not just in the intersection of a $C^{1}$-open set with $\diff^{2}_{m}(M)$. On the other hand, all these three results require partial hyperbolicity. In this sense our Theorem \ref{teo.stable.minimality} is stronger in that it only requires generically the presence of a minimal expanding foliation and a hyperbolic periodic point of an adequate index. No partial hyperbolicity is required. See Section \ref{section.final} for a mechanism to obtain non partially hyperbolic stably minimal expanding foliations. We stress that nevertheless, the existence of a {\em dominated splitting} will follow from the hypothesis, see the beginning of Section \ref{section.proof.thm.b}. A diffeomorphism $f$ has a {\em dominated splitting} if the tangent bundle over $M$ splits into two $Df$-invariant subbundles $TM= E \oplus F$ such that given any $x\in M$, any unitary vectors $v_E \in E(x)$ and $v_F \in F(x)$:
$$\parallel Df^{N}(x) v_E \parallel \leq \frac{1}{2}\parallel Df^{N}(x) v_F \parallel$$
for some $N>0$ independent of $x$.\newline\par 
Recently, minimality has been proven a generic mechanism to activate not only robust topologically mixing properties but also stable ergodicity and even stable Bernoulliness for $3$-dimensional manifolds \cite{NH19}. Even though the statement of this result is deeply connected to the results in this paper, the techniques used there are completely different. \par 
{\em Ergodicity} is a frequent assumption in physical modeling. A diffeomorphism $f$ is {\em ergodic} if it has the same behavior averaged over time as averaged over the space of all states, or, equivalently, if every measurable $f$-invariant set has either full or null measure. Since one often deals with perturbations of $f$, it is of particular interest to study the mechanisms that activate {\em stable ergodicity}. A volume preserving diffeomorphism $f$ is {\em stably ergodic} if there exists a $C^{1}$-neighborhood ${\mathcal U}\subset \diff^{1}_{m}(M)$ such that all $C^{2}$ diffeomorphisms $g\in {\mathcal U}$ are ergodic. The requirement that the surrounding diffeomorphisms $g$ be $C^{2}$ is due to the following open question:

\begin{question}
 Does there exist a $C^{1}$-stably ergodic diffeomorphism? In other words, is there a $C^{1}$-open set of volume preserving ergodic diffeomorphisms?
\end{question}

With the above definition, a $C^{1}$-stably ergodic diffeomorphism might not be ergodic. Even though it is somewhat awkward, we will keep this notation for practical reasons. Observe that, since ergodicity is a $G_{\delta}$-property, the $C^{1}$-generic stably ergodic diffeomorphism is indeed ergodic.\newline\par

The first known mechanism to activate stable ergodicity is {\em hyperbolicity} \cite{AS1967}. A diffeomorphism $f$ is {\em hyperbolic} or {\em Anosov} if there is a $Df$-invariant splitting of the tangent bundle $TM=E^{s}\oplus E^{u}$ such that, for a suitable Riemannian metric, all unit  vectors $v^{s}\in E^{s}_{x}$ and $v^{u}\in E^{u}_{x}$ satisfy:
$$\|Df(x)v^{s}\|<1<\|Df(x)v^{u}\|.$$
An Anosov diffeomorphism has always a dominated splitting. \par
In 1995, Pugh and Shub conjectured that ``a little hyperbolicity goes a long way toward guaranteeing stable ergodicity''. What they had in mind in that moment was {\em partial hyperbolicity}. A diffeomorphism $f$ is {\em partially hyperbolic} if there is a $Df$-invariant splitting of the tangent bundle $TM=E^{s}\oplus E^{c}\oplus E^{u}$ such that, for a suitable Riemannian metric, all unit vectors $v^{\sigma}\in E^{\sigma}_{x}$ with $\sigma=s,c,u$ satisfy
$$\|Df(x)v^{s}\|<\|Df(x)v^{c}\|<\|Df(x)v^{u}\|$$
$$\|Df(x)v^{s}\|<1<\|Df(x)v^{u}\|.$$
An Anosov diffeomorphism is partially hyperbolic, with $E^{c}=\{0\}$. A partially hyperbolic diffeomorphism has a dominated splitting. Partial hyperbolicity has been recently shown to be a generic mechanism activating stable ergodicity \cite{ACW2017}. \newline\par
Following Pugh and Shub, we would like to propose ``a little hyperbolicity'' as a generic mechanism activating stable ergodicity. How far can we go in asking just a little? In 2012, the second author proposed the first author the following problem:

\begin{conjecture}\label{conjecture.dichotomy} \cite{NH19}
 Generically in $\diff^{1}_{m}(M)$, if $f$ has positive metric entropy with respect to Lebesgue measure, then $f$ is stably ergodic. 
\end{conjecture}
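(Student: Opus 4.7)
The plan is to treat this conjecture as the higher-dimensional counterpart of the $3$-manifold result \cite{NH19}, with Theorem~A serving as the main bridge from ``a little hyperbolicity'' to stable ergodicity. The strategy has three layers: first, extract a robust geometric structure from the entropy hypothesis; second, verify the hypotheses of Theorem~A generically; and third, deduce ergodicity by a Hopf-type argument upgraded via absolute continuity.

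The first step is to convert positive metric entropy into the data required by Theorem~A. By the Ruelle--Margulis inequality, positive entropy forces nonzero Lyapunov exponents on a positive-measure invariant set; Oseledets then supplies a measurable invariant distribution $F$ of some dimension $u>0$ along which all exponents are positive. Combining the Bochi--Viana dichotomy with Ma\~n\'e's ergodic closing lemma and Kupka--Smale density, a $C^1$-generic $f\in\diff^1_m(M)$ should admit a genuine dominated splitting $TM=E\oplus F$ together with a hyperbolic periodic point $p$ of unstable index $u(p)=\dim F$. A further generic perturbation (in the spirit of Franks' lemma and its Gourmelon refinements) should upgrade $F$ to a uniformly expanding $Df$-invariant subbundle, integrable by Hirsch--Pugh--Shub to an $f$-invariant expanding foliation $W$. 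Using connecting-lemma technology (Hayashi, Bonatti--Crovisier) and the expectation that generically the homoclinic class of $p$ carries the ergodic components of $m$, one would then establish $C^1$-generic minimality of $W$. Theorem~A now yields a $C^1$-neighborhood $\U$ of $f$ on which the continuation $W_g$ is minimal for every $g\in\U\cap\diff^2_m(M)$; running the same program on $f^{-1}$ gives a minimal contracting foliation, and a standard Hopf argument using absolute continuity produces an essentially dense, full-measure ergodic component for each such $g$, i.e.\ stable ergodicity (and, by the machinery of \cite{ACW2017}, even the Bernoulli property).

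The main obstacle is the $C^1$-generic minimality of $W$. When $\dim F=1$, density of a single strong unstable leaf propagates to all leaves almost for free, which is why \cite{NH19} succeeds in dimension $3$; when $\dim F>1$ one must control every leaf of $W$ simultaneously, and this genuinely requires new ideas, possibly through the non-partially-hyperbolic construction sketched in Section~\ref{section.final} of the present paper. A secondary, substantial obstacle is the uniformization step: turning the purely measurable, Pesin-theoretic expanding distribution extracted from the entropy hypothesis into a continuous, $Df$-invariant, uniformly expanding sub-bundle defined on all of $M$, so that Theorem~A becomes applicable in the first place. These are the two places where the argument has to move strictly beyond what the present work provides.
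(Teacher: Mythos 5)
The statement you set out to prove is Conjecture~\ref{conjecture.dichotomy}; the paper does not prove it and says explicitly that it ``seems far to be solved with the current techniques'', so there is no proof to compare yours against, and your text is a research outline rather than a proof. The gaps are essential, not technical. First, the uniformization step fails as described: under positive metric entropy, Theorem~\ref{teo.jana.acw} generically gives a \emph{dominated} zipped Oseledets splitting $TM=E^{+}\oplus E^{-}$, but when $\dim E^{+}\ge 2$ domination does not make $E^{+}$ uniformly expanding, and no Franks/Gourmelon-type perturbation can repair this --- besides the bookkeeping problem that perturbing a generic $f$ takes you outside the residual set on which the conclusion must hold, uniform expansion of $E^{+}$ is simply not a $C^{1}$-generic consequence of positive entropy in dimension $>3$. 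This is exactly why the paper takes the existence of an expanding foliation as a \emph{hypothesis} in Theorems~\ref{teo.stable.minimality} and~\ref{theorem.B}, and why Conjecture~\ref{conjecture.minimal} is stated only for $M^{3}$, where the extreme one-dimensional dominated bundle is automatically hyperbolic. Second, even granted the foliation, its minimality together with a periodic point of matching index is again a hypothesis of those theorems; producing it generically is precisely the open content of Conjectures~\ref{conjecture.minimal} and~\ref{conjecture.minimality.2}, and the connecting-lemma/homoclinic-class heuristics you invoke only give density of $W(p)$ (as in Proposition~\ref{proposition.criterion}) after the dominated, uniformly expanding structure is already in place.

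Third, your endgame also does not close with the machinery at hand. Theorem~\ref{theorem.B} yields only a hyperbolic ergodic component whose \emph{essential closure} is $M$, and Lemma~\ref{avila.bochi} gives measure $>1-\eps$ only on an $\eps$-dependent neighborhood, so neither statement gives a full-measure ergodic component on a fixed $C^{1}$-neighborhood. The Hopf-type argument ``minimal expanding plus minimal contracting foliation $\Rightarrow$ ergodicity (and Bernoulli)'' that you appeal to is exactly Theorem~A of \cite{NH19}, known only in dimension $3$, i.e.\ Conjecture~\ref{conjecture.minimality.2} in general; and nothing in the entropy hypothesis guarantees that the program can be run for $f^{-1}$ to produce a minimal \emph{contracting} foliation either. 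You candidly flag the first two obstructions yourself, but together with the third they mean the proposal does not constitute a proof of the conjecture; it is a plausible roadmap whose every main arch (uniformization, minimality, and the ergodicity upgrade) is currently open.
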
 

The problem was stated in that moment in dimension 3, because Theorem \ref{teo.jana.acw} was then only known to hold in dimension less or equal than 3.  After it was proven to hold in any dimension, it is natural to extend the conjecture to any dimension. This is as little hyperbolicity as one can get: generically, positive metric entropy activates stable ergodicity. From Theorem \ref{teo.jana.acw} it follows that in that case generically there is also a dominated splitting and {\em non-uniform hyperbolicity}: all Lyapunov exponents are non-zero almost everywhere. \newline\par 

Conjecture \ref{conjecture.dichotomy} seems far to be solved with the current techniques. However, the following could be an approach in dimension 3:
\begin{conjecture}\label{conjecture.minimal}
 Generically in $\diff^{1}_{m}(M^{3})$ if $f$ has positive metric entropy, then there exists a minimal invariant expanding or contracting foliation. 
\end{conjecture}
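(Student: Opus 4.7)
The plan is to use the Avila--Crovisier--Wilkinson theorem (Theorem~\ref{teo.jana.acw}) as a starting point: for a $C^1$-generic $f\in\diff^1_m(M^3)$ with positive metric entropy, $f$ admits a non-trivial dominated splitting and is non-uniformly hyperbolic on each ergodic component of positive metric entropy. In dimension three the dominated splitting is of the form $TM=E\oplus F$ with $\{\dim E,\dim F\}=\{1,2\}$. Replacing $f$ by $f^{-1}$ if necessary, we may assume that $F$ is one-dimensional and that $F$ dominates $E$. By domination the Lyapunov exponent along $F$ is strictly larger than the top exponent along $E$, and since all exponents are non-zero a.e.\ this forces the exponent along $F$ to be strictly positive on an ergodic component of positive measure.

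The core step is to upgrade this non-uniform expansion of $F$ to \emph{uniform} expansion, so that $F$ becomes $C^1$-integrable and yields an $f$-invariant expanding foliation. I would combine Ma\~n\'e's ergodic closing lemma with Bochi--Viana-type perturbations of the derivative cocycle in the conservative $C^1$-category: the closing lemma produces hyperbolic periodic points whose Lyapunov exponent along $F$ approximates the a.e.\ integrated exponent, and cone-field arguments together with Franks-like perturbations compatible with volume should transfer uniform expansion from a dense set of periodic orbits to the whole manifold after a further $C^1$-perturbation of $f$. Once $F$ is uniformly expanding, Hirsch--Pugh--Shub invariant manifold theory produces the desired invariant foliation $W$. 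To enforce minimality, one then applies the Bonatti--D\'iaz--Ures scheme \cite{BDU2002} (or the Hertz--Hertz--Ures extension \cite{HHU2007}): in the three-dimensional setting with a one-dimensional strong unstable bundle dominated away from tangencies, a final $C^1$-perturbation places us in the class where the strong unstable foliation is robustly minimal. Theorem~\ref{teo.stable.minimality} of the present paper then even promotes minimality to stable minimality, closing the argument.

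The main obstacle is precisely the first half of the second paragraph: promoting the non-uniform expansion of a dominated one-dimensional subbundle to uniform expansion by a $C^1$-perturbation \emph{within the volume-preserving class}. This is essentially a conservative version of the Ma\~n\'e dichotomy, and the standard connecting-lemma and cocycle-perturbation tools are notoriously hard to combine with the constraint of preserving $m$ without either destroying domination or losing the hyperbolicity just produced at the periodic orbits. A successful attack will likely demand a volume-preserving refinement of Ma\~n\'e's argument, together with careful bookkeeping of the cone fields furnished by the original dominated splitting; short of that, one may have to proceed indirectly, for instance by first ruling out zero central Lyapunov exponents generically and then invoking Pesin theory on the resulting Pesin blocks to construct minimal expanding laminations that can afterwards be closed up into a genuine foliation.
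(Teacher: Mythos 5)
This statement is not proved in the paper at all: it is stated as an open conjecture (Conjecture~\ref{conjecture.minimal}), offered only as a possible route toward Conjecture~\ref{conjecture.dichotomy}, so there is no proof of record to compare with, and your proposal does not close it either. The part of your sketch devoted to producing the invariant expanding (or contracting) foliation is, according to the paper, not the hard part: generically, positive metric entropy gives via Theorem~\ref{teo.jana.acw} a dominated zipped Oseledets splitting, in dimension $3$ one of the two subbundles is one-dimensional, and the paper asserts (as an exercise) that domination forces that one-dimensional bundle to be uniformly contracting or expanding, hence integrable. So the elaborate Ma\~n\'e/Bochi--Viana/Franks perturbation scheme you describe for upgrading non-uniform to uniform expansion is aimed at a step the authors consider essentially known, and in any case you yourself concede you cannot carry it out in the conservative $C^1$ category --- which already means the proposal is a programme, not a proof.

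The genuine content of the conjecture is the \emph{minimality} of the resulting foliation, and this is exactly where your argument breaks down. You invoke the Bonatti--D\'iaz--Ures scheme \cite{BDU2002} and its extension \cite{HHU2007}, but those results apply to robustly transitive \emph{partially hyperbolic} diffeomorphisms (in \cite{BDU2002} additionally far from tangencies, in \cite{HHU2007} with one-dimensional center); here you only have a dominated splitting $TM=E\oplus F$ with $F$ one-dimensional and uniformly expanding, with no uniform contraction on $E$, no partial hyperbolicity, and no robust transitivity, so neither theorem is applicable. Moreover, your repeated device of ``a final $C^1$-perturbation places us in the class where the strong unstable foliation is robustly minimal'' cannot prove a statement about \emph{generic} $f$: perturbing each $f$ into a good open class yields at best a dense set of diffeomorphisms with the property, not a residual one, and the conjecture quantifies over a residual subset of $\diff^1_m(M^3)$. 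Finally, appealing to Theorem~\ref{teo.stable.minimality} at the end is circular with respect to the conjecture's role in the paper: that theorem \emph{assumes} a minimal expanding foliation (plus a periodic point of matching index), which is precisely what the conjecture is supposed to supply.
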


From Theorem \ref{teo.jana.acw} it follows that generically in dimension 3, the fact that $f$ has positive metric entropy implies that there is a dominated splitting. One of the subbundles of the splitting is one-dimensional. Domination then implies its hyperbolicity, namely, that it is either contracting or expanding (this is left as an exercise to the reader). Hyperbolicity of this bundle implies it is integrated to an expanding or contracting foliation. We conjecture that at least one of this expanding or contracting foliations is minimal. Of course Conjecture \ref{conjecture.dichotomy} could follow without the validity of Conjecture \ref{conjecture.minimal}. \par
We want to state here a more modest conjecture proposing minimality of an invariant expanding or contracting foliation as a generic mechanism activating stable ergodicity. Namely, that Theorem A in \cite{NH19} holds in any dimension:

\begin{conjecture}\label{conjecture.minimality.2}
 Generically in $\diff^{1}_{m}(M)$, the existence of a minimal invariant expanding or contracting foliation implies stable ergodicity, and even {\em stable Bernoullines}.
\end{conjecture}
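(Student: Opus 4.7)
The plan is to bootstrap from \thmref{teo.stable.minimality}. Given a generic $f\in\diff^{1}_{m}(M)$ admitting a minimal expanding invariant foliation $W$ (the contracting case reduces to this by passing to $f^{-1}$), I would first produce a hyperbolic periodic point $p$ of unstable index $\dim W$, then invoke \thmref{teo.stable.minimality} to obtain stable minimality of $W$ together with the essentially dense hyperbolic Bernoulli ergodic component supplied in the abstract, and finally upgrade that component to full measure in a $C^{1}$-robust way, concluding stable ergodicity and in fact stable Bernoulliness.

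The first step is to establish that, $C^{1}$-generically in $\diff^{1}_{m}(M)$, the existence of a minimal expanding foliation $W$ forces a hyperbolic periodic point $p$ with $u(p)=\dim W$. Since $W$ is uniformly expanding, the integrated Lyapunov exponents along $TW$ are uniformly bounded below by a positive constant for every $f$-invariant measure, so any such measure has at least $\dim W$ positive exponents. Ma\~n\'e's ergodic closing lemma together with a Bochi--Viana type continuity of the Oseledets splitting $C^{1}$-generically allow one to approximate an ergodic $f$-invariant measure by hyperbolic periodic orbits whose unstable index realizes the number of positive exponents; a standard residual intersection then yields the desired $p$. \thmref{teo.stable.minimality} now applies, producing a $C^{1}$-neighborhood $\U$ of $f$ such that for every $g\in\U\cap\diff^{2}_{m}(M)$ the continuation $W_{g}$ is minimal and there is a hyperbolic ergodic Bernoulli component $B_{g}\subset M$ of $m$ which is essentially dense.

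It remains to prove that $m(B_{g})=1$. For $g\in\diff^{2}_{m}(M)$, Pesin theory yields at $m$-a.e.\ $x\in B_{g}$ a Pesin unstable manifold $W^{u}_{P}(x)$ containing the $W_{g}$-leaf through $x$, because $TW_{g}$ is uniformly expanded. Absolute continuity of the unstable Pesin lamination on each Pesin block, combined with the Lebesgue density theorem, forces $B_{g}$ to be saturated modulo $m$-null sets by $W_{g}$; minimality of $W_{g}$ then makes $B_{g}$ topologically dense. A Hopf-type argument using absolute continuity of the Pesin stable lamination (which is defined almost everywhere on $B_{g}$ by its hyperbolicity) shows that any $g$-invariant, $W_{g}$-saturated set of positive $m$-measure must in fact have full measure. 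Consequently $B_{g}=M$ modulo $m$-null sets, $g$ is ergodic, and being Bernoulli on a set of full measure, $g$ itself is Bernoulli. Uniformity over $g\in\U\cap\diff^{2}_{m}(M)$ yields stable Bernoulliness.

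The main obstacle lies precisely in this last upgrade. The hypothesis yields only a dominated splitting rather than partial hyperbolicity, so the complementary bundle is not uniformly contracted and the stable leg of Pesin theory is only defined almost everywhere, with non-uniform sizes and possibly variable dimension. The Burns--Wilkinson and Pugh--Shub accessibility machinery therefore does not apply verbatim; one must execute the Hopf argument in the non-uniformly hyperbolic setting and track how Pesin blocks deform as $g$ varies throughout $\U$, so that the saturation and density steps survive under $C^{1}$-perturbation. Establishing this robustness of the Pesin stable partition in the absence of uniform contraction is what makes \emph{this} formulation of the conjecture genuinely delicate.
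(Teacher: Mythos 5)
The statement you are proving is stated in the paper as a \emph{conjecture}: the authors prove it only in dimension $3$ (by the different techniques of \cite{NH19}) and offer Theorem~\ref{theorem.B} merely as ``an approach''. Your proposal has two genuine gaps, each of which is precisely one of the obstructions the paper is organized around. First, the reduction to Theorem~\ref{teo.stable.minimality} requires a hyperbolic periodic point $p$ with $u(p)=\dim W$ \emph{exactly}, and this is an irreducible hypothesis, not a generic consequence of having a minimal expanding foliation. Your closing-lemma argument produces periodic orbits whose unstable index equals the number of positive exponents of the approximated ergodic measure, and that number is generically $\dim E^{+}$, which can be strictly larger than $\dim W$: uniform expansion along $TW$ forces at least $\dim W$ positive exponents but says nothing about the complementary bundle. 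The model case is the strong unstable foliation $W^{uu}$ of an Anosov diffeomorphism with splitting $E^{uu}\oplus E^{u}\oplus E^{s}$, where \emph{every} periodic point has unstable index $\dim E^{uu}+\dim E^{u}>\dim W^{uu}$, so no point with the required index exists at all; this is exactly why the paper flags the hypothesis as ``awkward'' and records the related open Question about minimality of $W^{uu}$.

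Second, even granting $p$, Theorem~\ref{theorem.B} delivers only a hyperbolic ergodic component $\phc_{g}(q_{g})$ whose \emph{essential closure} is $M$, and the upgrade from essential density to full measure is the actual open content of the conjecture, not a routine Hopf argument. Your claim that any $g$-invariant, $W_{g}$-saturated set of positive measure has full measure cannot be run as stated: on the complement of the component the Lyapunov exponents may vanish, so Pesin stable manifolds need not exist there; on the component the stable lamination has no uniform size or dimension, so absolute continuity on Pesin blocks gives no control of the complement; and saturation by $W_{g}$ together with minimality yields only topological density, which is exactly the content of $\ess{\phc_{g}(q_{g})}=M$ already proved. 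Note also that Lemma~\ref{avila.bochi} gives $m(\phc_{g}(q_{g}))>1-\eps$ only on a neighborhood that shrinks with $\eps$, so it cannot be bootstrapped to full measure on a fixed $C^{1}$-neighborhood. In short, your architecture (minimality $\Rightarrow$ suitable periodic point $\Rightarrow$ Theorems~\ref{teo.stable.minimality} and \ref{theorem.B} $\Rightarrow$ full-measure Bernoulli component) assumes away both of the steps that are genuinely open.
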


A diffeomorphism $f\in\diff^{1}_{m}(M)$ is {\em stably Bernoulli} if there exists a $C^{1}$-neighborhood $\U\subset\diff^{1}_{m}(M)$ of $f$ such that all $g\in\U\cap \diff^{2}_{m}(M)$ are Bernoulli, that is, are metrically isomorphic to a Bernoulli shift. \par

As happens with other definitions in this paper, a $C^{1}$ stably Bernoulli diffeomorphism might not be Bernoulli. We are aware that this is not a standard definition, but for practical reasons we will state it like this. Of course, a $C^{2}$ stably Bernoulli diffeomorphism is Bernoulli. Bernoulliness is not necessarily a $G_{\delta}$-property, whence we cannot say that the generic stably Bernoulli diffeomorphism is Bernoulli a priori. \par

The following theorem is an approach to proving Conjecture \ref{conjecture.minimality.2}:

\begin{maintheorem}\label{theorem.B}
For a generic diffeomorphism $f\in\diff^{1}_{m}(M)$, if there exists a minimal invariant expanding foliation $W$ for which there is a hyperbolic periodic $p$ with unstable index $u(p)=\dim W$, then:\par
There exists a $C^{1}$-neighborhood ${\mathcal U}(f)\subset\diff^{1}_{m}(M)$ such that $\forall g\in{\mathcal U}(f)\cap \diff^{2}_{m}(M)$
there is a hyperbolic ergodic component $\phc_{g}(q_{g})$ whose essential closure satisfies $$\ess{\phc_{g}(q_{g})}=M$$
\end{maintheorem}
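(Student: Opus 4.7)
The plan is to combine Theorem~\ref{teo.stable.minimality} with Pesin--Katok theory for the $C^{2}$ diffeomorphisms in $\mathcal{U}(f)$.

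First, Theorem~\ref{teo.stable.minimality} supplies a $C^{1}$-neighborhood $\mathcal{U}(f)\subset\diff^{1}_{m}(M)$ such that for every $g\in \mathcal{U}(f)\cap \diff^{2}_{m}(M)$ the continuation $W_{g}$ of $W$ is minimal. Shrinking $\mathcal{U}(f)$ if necessary, persistence of hyperbolic periodic points produces a continuation $q_{g}$ of $p$ with $u(q_{g})=\dim W$. Since $TW_{g}$ is $Dg$-invariant and uniformly expanding while $E^{s}(q_{g})$ is $Dg$-invariant and contracting along $\operatorname{orb}(q_{g})$, a dimension count forces $TW_{g}=E^{u}(q_{g})$ on $\operatorname{orb}(q_{g})$; hence the strong unstable manifold $W^{u}(q_{g})$ coincides with the leaf $W_{g}(q_{g})$, and minimality of $W_{g}$ gives $\overline{W^{u}(q_{g})}=M$.

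Next I would invoke Pesin theory. Let $\phc_{g}(q_{g})$ denote the Pesin homoclinic class of $q_{g}$: the set of Lyapunov regular points whose Pesin stable and unstable manifolds are transversally homoclinically related to those of $q_{g}$. Under the running hypotheses ($g\in C^{2}$ volume preserving, $q_{g}$ hyperbolic, dominated splitting guaranteed by the hypothesis), Pesin--Katok theory shows that $\phc_{g}(q_{g})$ lies, modulo $m$-null sets, in a single ergodic component of $m$, and once it has positive measure it is automatically Bernoulli. Positivity of $m(\phc_{g}(q_{g}))$ follows from the absolute continuity of the Pesin unstable foliation together with the transverse homoclinic intersections available in every small neighborhood of $q_{g}$.

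The core step is the essential density $\ess{\phc_{g}(q_{g})}=M$. To establish this it suffices to show $m(\phc_{g}(q_{g})\cap V)>0$ for every open set $V\subset M$. Given such a $V$, the density of $W^{u}(q_{g})=W_{g}(q_{g})$ produces a strong unstable disk $D\subset W^{u}(q_{g})\cap V$ of positive $W_{g}$-leaf measure. By Pesin's theorem, a positive $m$-measure set of points in a suitable Pesin block near $D$ has well-defined stable manifolds; an inclination-lemma argument inside such a block shows that these stable manifolds transversally intersect $D$, and hence, after iteration, transversally intersect $W^{s}(q_{g})$. Absolute continuity of the Pesin stable holonomy then transports positive $m$-measure from a neighborhood of $D$ into $\phc_{g}(q_{g})\cap V$. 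The main obstacle lies exactly in this last transfer of measure: outside a uniform Pesin block the sizes of stable manifolds and the holonomy estimates degenerate, so one must choose the Pesin block adapted to each $V$, using the uniform expansion along $TW_{g}$ (which furnishes $\dim W$ uniformly positive Lyapunov exponents $m$-a.e.) to secure enough uniformity of the Pesin data. Once essential density is proved, Bernoullicity of the component is a standard consequence of Katok's theorem.
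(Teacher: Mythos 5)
Your proposal has several genuine gaps, the most structural being circularity: your first step invokes Theorem~\ref{teo.stable.minimality} to get minimality of $W_{g}$ for every $C^{2}$ perturbation $g$, but in this paper Theorem~\ref{teo.stable.minimality} is deduced \emph{from} Theorem~\ref{theorem.B} (both in Case 1 and Case 2 of Section~\ref{section.thm.A}), and you offer no independent proof of it. The actual proof of Theorem~\ref{theorem.B} only uses minimality of $W$ for $f$ itself: minimality guarantees that every leaf carries a well placed unstable $k$-strip inside a fixed superblender region $Bl^{s}_{\Lambda}(x_{0})$ (Theorem~\ref{thm.superblender}), a compact-leaf condition that persists for all $g$ in a $C^{1}$-neighborhood without knowing anything about minimality of $W_{g}$. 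A second, independent error is your choice of $q_{g}$ as the continuation of $p$. The relevant periodic point is the one furnished by Theorem~\ref{teo.jana.acw}, whose unstable index is $\dim E^{+}$, and in general $u(q)\geq u(p)=\dim W$ with strict inequality possible (this is exactly why Section~\ref{section.thm.A} needs a separate Case 2 with blenders). If $u(q)>u(p)$, then $m$-a.e.\ point has Pesin stable manifolds of dimension $\dim M-u(q)<\dim M-u(p)$, so $W^{-}(x)$ can never transversally meet $W^{u}(o(p_{g}))$ and $\phc^{-}_{g}(p_{g})$ is negligible; the Pesin class of the continuation of $p$ simply cannot be the essentially dense hyperbolic ergodic component in that case.

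The measure-theoretic core is also missing. You assert that $m(\phc_{g}(q_{g}))>0$ ``follows from absolute continuity of the Pesin unstable foliation together with transverse homoclinic intersections,'' but homoclinic intersections only produce zero-volume horseshoes, and absolute continuity by itself does not give positive volume of the Pesin class for an arbitrary $C^{2}$ perturbation of a merely generic $C^{1}$ diffeomorphism; the paper gets this from Lemma~\ref{avila.bochi} (Avila--Bochi semicontinuity at the generic $f$, combined with Theorem~\ref{teo.abdenur.crovisier}), which yields $m(\phc_{g}(q_{g}))>1-\eps$ robustly. Similarly, your essential-density step assumes that near every unstable disc $D\subset V$ there is a positive-measure Pesin block of points with stable manifolds of dimension $\dim M-\dim W$ of uniform size crossing $D$ --- but the existence of such nonuniformly hyperbolic mass in every open set is essentially the conclusion being proved, not a given. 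The paper's route avoids this: the superblender forces every leaf $W_{g}(x)$ to meet $W^{s}(\Lambda_{g})\subset\overline{W^{s}(q_{g})}=\ess{\phc_{g}(q_{g})}$ (Lemmas~\ref{lemma.phc} and~\ref{lemma.ws}), and then the saturation Lemma~\ref{lemma.jana}, applicable because $\ess{\phc_{g}(q_{g})}$ is invariant, essentially closed and of positive measure by Lemma~\ref{avila.bochi}, shows the whole leaf lies in $\ess{\phc_{g}(q_{g})}$, giving $\ess{\phc_{g}(q_{g})}=M$; ergodicity and hyperbolicity of the component then come from Theorem~\ref{criterion_hhtu}. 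You would need to restructure your argument around these ingredients (or genuinely new ones) rather than around Theorem~\ref{teo.stable.minimality} and Pesin-block estimates.
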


A measurable set $A$ such that $m(A)>0$ is an {\em ergodic component} if $f|A$ is ergodic. The ergodic component is {\em hyperbolic} if all Lyapunov exponents of $f$ on $A$ are different from zero (see definitions in Section \ref{section.definitions}). The {\em essential closure} of a set $A$ is the set
$$\ess{A}=\{x\in M: \forall \eps>0 \quad m(B_{\eps}(x)\cap A)>0\}$$
A set whose essential closure is the whole manifold $M$ is called {\em essentially dense}. 
What is special about Theorem \ref{theorem.B} is that the ergodic component $\phc_{g}(q_{g})$ is given explicitly. It consists of all points satisfying a topological condition. See Section \ref{section.definitions}. \newline\par

The paper will be organized as follows: In Section \ref{section.definitions} the basic definitions and results necessary for the proof will be introduced.  Theorem \ref{theorem.B} is proven in Section \ref{section.proof.thm.b}. Theorem \ref{teo.stable.minimality} is proven in Section \ref{section.thm.A}. In the final Section \ref{section.final}, we describe a mechanism to obtain new examples, and provide them.  
 
\section{Basic concepts}\label{section.definitions}

Let $f\in\diff^{1}_{m}(M)$ be a volume preserving diffeomorphism. We will say that $\lambda(x,v)$ is the {\em Lyapunov exponent}  associated to $v\in T_{x}M$ if
$$\lambda(x,v)=\limsup_{n\to\pm\infty}\frac{1}{n}\log\|Df^{n}(x)v\|$$
For $m$-almost every $x\in M$, there are finitely many Lyapunov exponents $\lambda_{1}(x),\dots,\lambda_{k}(x)$ in $T_{x}M$, and there is a measurable $Df$-invariant splitting, called the {\em Oseledets splitting} 
$$T_{x}M=E^{1}_{x}\oplus\dots\oplus E^{k}_{x}$$
such that $\lambda(x,v_{i})=\lambda_{i}(x)$ for all $v_{i}\in E^{i}_{x}\setminus\{0\}$. See for instance, \cite{Pesin1977}. 
We denote by $\nuh(f)$ the set of $x$ such that all $\lambda(x,v)$ are different from zero. 
The measure $m$ is called {\em hyperbolic} if $\nuh(f)$ has full $m$-measure, that is, if all Lyapunov exponents are different from zero almost everywhere. 
For simplicity, we will denote
\begin{equation}\label{zipped.oseledets.splitting}
T_{x}=E^{-}_{x}\oplus E^{0}_{x}\oplus E^{+}_{x} 
\end{equation}

the splitting such that $\lambda(x,v^{+})>0$ for all $v^{+}\in E^{+}_{x}$, $\lambda(x,v^{0})=0$ for all $v^{0}\in E^{0}_{x}$ and $\lambda(x,v^{-})<0$ for all $v^{-}\in E^{-}_{x}$. We call this splitting the {\em zipped Oseledets splitting}.
\par
We will say that a measurable set $A$ is an {\em ergodic component} of $m$ if $m(A)>0$ and $f|A$ is ergodic. $A$ is a {\em hyperbolic} ergodic component if $A\subset \nuh(f)$.\par
For $x\in M$, let us define
\begin{equation}\label{eq.invariant.manifolds}
 W^{\pm}(x)=\left\{y\in M: \limsup_{n\to\infty}\frac{1}{n}\log d(f^{\mp n}(x),f^{\mp n}(y))<0\right\}
\end{equation}
If $f\in\diff^{2}_{m}(M)$ then for $m$-almost every point $x$, $W^{+}(x)$ and $W^{-}(x)$ are smooth immersed manifolds \cite{Pesin1977}. For $f\in\diff^{1}_{m}(M)$ this is not necessarily true \cite{Pugh1984}.
However, if the zipped Oseledets splitting is dominated, then both $W^{+}(x)$ and $W^{-}(x)$ are immersed manifolds for $m$-almost every point, see \cite{ABC2011}.\par

Following \cite{HHTU11}, given a hyperbolic periodic point $p\in M$ we define the {\em stable Pesin homoclinic class} of $p$ by 
\begin{equation}\label{eq.phc.-}
\phc^{-}(p)=\{x: W^{-}(x)\transv W^{u}(o(p))\ne\emptyset\} 
\end{equation}
where $W^{u}(o(p))$ is the union of the unstable manifolds of $f^{k}(p)$, for all $k=0,\dots,{\rm per}(p)-1$. $\phc^{-}(p)$ is invariant and saturated by $W^{-}$-leaves. See Figure \ref{phc-}. We will also denote by $o(p)$ the orbit of $p$. 
\begin{figure}
 \includegraphics[width=.7\textwidth]{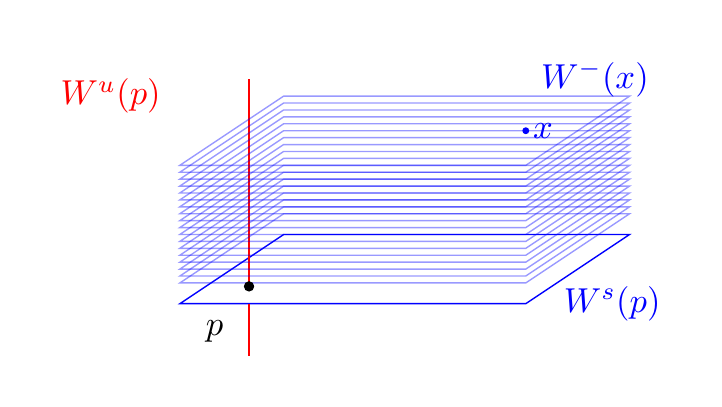}
 \caption{\label{phc-}$\phc^{-}(p)$}
\end{figure}
Analogously, we define 
\begin{equation}\label{eq.phc.+}
\phc^{+}(p)=\{x: W^{+}(x)\transv W^{s}(o(p))\ne\emptyset\} 
\end{equation}
\begin{figure}
 \includegraphics[width=.7\textwidth]{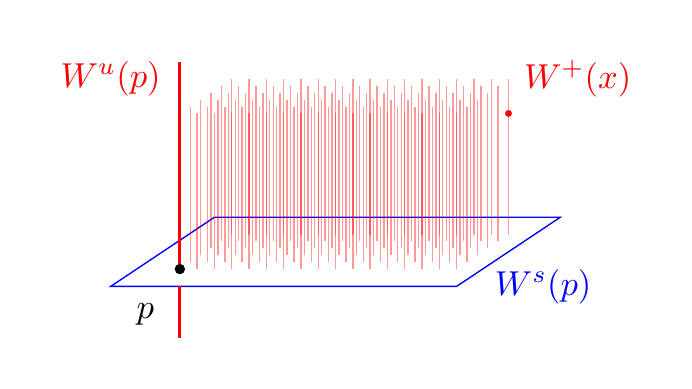}
 \caption{\label{phc+}$\phc^{+}(p)$}
\end{figure}
which is invariant and saturated by $W^{+}$-leaves. See Figure \ref{phc+}. We denote by $\phc(p)$ the intersection of $\phc^{-}(p)$ and $\phc^{+}(p)$.\par 

If there exists an expanding foliation $W^{u}$, we will denote 
\begin{equation}\label{eq.phc.u}
 \phc^{u}(p)=\{x\in M: W^{u}(x)\transv W^{s}(o(p))\ne\emptyset\}
\end{equation}
Analogously we define $\phc^{s}(p)$ if a contracting foliation $W^{s}$ is given. The foliation will be clear from the context, if it is not, we will denote these sets by $\phc^{W}(p)$, where $W$ is given.

\section{Proof of Theorem \ref{theorem.B}}\label{section.proof.thm.b}
This section is devoted to the proof of Theorem \ref{theorem.B}. Let $f\in\diff^{1}_{m}(M)$ be a generic diffeomorphism with an invariant expanding minimal foliation $W_{f}$ and let $p$ be a hyperbolic periodic point with unstable index $u(p)=\dim W_{f}$. This implies that for $m$-almost every $x\in M$, $\lambda(x,v)>0$ for all $v\in T_{x}W_{f}\setminus \{0\}$. The following theorem applies:

\begin{theorem}\cite{mane1984,bochi2002,JRH2012,ACW16}\label{teo.jana.acw} For a generic $f\in\diff^{1}_{m}(M)$, either all Lyapunov exponents are zero $m$-almost everywhere, or else:
\begin{enumerate}
 \item $f$ is ergodic
 \item the Oseledets splitting is dominated. Call the zipped Oseledets splitting $TM=E^{+}\oplus E^{-}$
\item there exists a hyperbolic periodic point $q$ with $u(q)=\dim E^{+}$ such that $\phc(q)\zeroeq\nuh(f)\zeroeq M$
\end{enumerate} 
\end{theorem}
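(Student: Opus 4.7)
The plan is to assemble this as a dichotomy theorem whose proof combines three principal ingredients from the literature: (a) the Mañé--Bochi trichotomy in $\diff^{1}_{m}(M)$ producing either trivial Lyapunov exponents or a dominated Oseledets splitting; (b) the Avila--Crovisier--Wilkinson upgrade of ``some hyperbolicity'' to ergodicity and non-uniform hyperbolicity; and (c) an ergodic-closing argument producing a hyperbolic periodic orbit whose Pesin homoclinic class exhausts $M$ up to a null set.

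First, I would set up the upper semi-continuity machinery of Bochi. The integrated Lyapunov exponents $f\mapsto \int \sum_{i\leq j}\lambda_{i}(x,f)\,dm(x)$ are upper semi-continuous functionals on $\diff^{1}_{m}(M)$, hence continuous on a residual set $\cR_{1}$. At any $f\in\cR_{1}$, Bochi's dichotomy asserts that either the Oseledets splitting admits a dominated refinement on the Pesin regular set, or perturbative mixing of Oseledets directions forces every Lyapunov exponent to vanish $m$-a.e. In the first alternative, the extension theorem for dominated splittings upgrades the measurable decomposition to a continuous $Df$-invariant splitting $TM=E^{+}\oplus E^{-}$ defined on all of $M$; by construction this is exactly the zipped Oseledets splitting at $m$-generic points, since any zero-exponent direction would have to sit in a non-dominated sub-bundle and could be perturbed away, contradicting continuity at $f$.

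Second, intersect $\cR_{1}$ with the residual set provided by the Avila--Crovisier--Wilkinson theorem. On that intersection, the existence of a non-trivial dominated splitting with positive-exponent sub-bundle $E^{+}$ forces non-uniform hyperbolicity on a full-measure set: no Oseledets exponent vanishes on a positive-measure invariant set, so $\nuh(f)\zeroeq M$. The ACW criterion for stable ergodicity (applied in the $C^{1}$-generic setting rather than requiring $C^{2}$) then yields that $f$ itself is ergodic, giving (1) and (2). This step tacitly also uses Rodriguez Hertz's approach via Pesin blocks to show that distinct ergodic components supporting hyperbolic measures must be separated by accessibility obstructions, which are removed generically.

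Third, I would produce the periodic point $q$ and establish $\phc(q)\zeroeq M$ by combining Mañé's ergodic closing lemma with a Hopf-chain argument. Fix a Pesin block of positive measure; the ergodic closing lemma provides, after a generic hypothesis ensuring Kupka--Smale conditions, a hyperbolic periodic orbit whose linearized behavior approximates the Oseledets data on that block, so $u(q)=\dim E^{+}$. For $m$-almost every $x$, ergodicity together with domination guarantees that $W^{+}(x)$ returns to visit a neighborhood of the Pesin block on which $W^{s}(o(q))$ has positive angle, producing a transverse intersection; the symmetric argument with $W^{-}(x)$ and $W^{u}(o(q))$ gives $x\in\phc(q)$ almost everywhere, i.e.\ $\phc(q)\zeroeq\nuh(f)\zeroeq M$. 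The main obstacle is precisely this synchronization step: controlling the Oseledets angles and the positions of Pesin manifolds well enough to guarantee that the periodic orbit produced by the closing lemma has matching unstable index \emph{and} its invariant manifolds meet the Pesin laminations transversally on a full-measure set. This is the hardest technical content, and is the part for which the combined work cited in the theorem (Mañé, Bochi, Rodriguez Hertz, and Avila--Crovisier--Wilkinson) is required.
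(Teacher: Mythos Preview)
The paper does not prove this theorem at all: it is stated with citations to \cite{mane1984,bochi2002,JRH2012,ACW16} and used as a black box in Section~\ref{section.proof.thm.b}. There is therefore no ``paper's own proof'' to compare your proposal against.

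That said, your sketch is a reasonable outline of how the cited ingredients combine, with one caveat. The Ma\~n\'e--Bochi step and the Avila--Crovisier--Wilkinson step are described accurately in spirit: generically, either all exponents vanish a.e.\ or the Oseledets splitting extends to a global dominated splitting $E^{+}\oplus E^{-}$; in the latter case the ACW machinery gives non-uniform hyperbolicity and ergodicity. For item~(3), however, your description leans too hard on the ergodic closing lemma alone. The identification $\phc(q)\zeroeq\nuh(f)\zeroeq M$ is really the content of \cite{JRH2012}, where one shows that for a $C^{1}$-generic $f$ the Pesin homoclinic class of a suitably chosen periodic point exhausts the non-uniformly hyperbolic set (using the connecting lemma and Pesin-block arguments, together with the fact that generically periodic points of a given index are homoclinically related); the closing lemma produces a candidate $q$, but the ``synchronization'' you flag as the hardest step is handled in that reference rather than by a direct Hopf-chain argument as you suggest. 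Your invocation of an ``ACW criterion for stable ergodicity'' in step~2 is also slightly off: the relevant ACW input here is generic ergodicity under a dominated splitting, not stable ergodicity.
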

The theorem above implies that generically an expanding invariant foliation has a continuation, so it makes sense to talk about stable minimality of an expanding invariant foliation. Indeed, generically the existence of an expanding invariant foliation $W_{f}$ implies there exists a dominated splitting $TM=TW_{f}\oplus F$ where $Df|_{TW_{f}}$ is expanding. Since dominated splittings vary continuously in the $C^{1}$-topology, it follows there exists a continuation of the subbundle $E(f)=TW_{f}$ in a $C^{1}$-neighborhood of $f$, so that $E(g)$ is $Dg$ invariant and $Dg|_{E(g)}$ is expanding. Since a dominated expanding bundle is always integrable it follows the existence of an invariant expanding foliation for each $g$ in a $C^{1}$-neighborhood of $f$. \par

From Theorem \ref{teo.jana.acw} it follows the existence of a hyperbolic periodic point $q$ such that 
$$\phc(q)\zeroeq M.$$
Since the unstable index of $p$ satisfies $u(p)=\dim W_{f}$, we deduce there are at least $u(p)$ positive Lyapunov exponents. But $u(q)$ is the maximum number of positive Lyapunov exponents for $f$, therefore $u(q)\geq u(p)$.\par

A {\em superblender}, introduced by Moreira and Silva in \cite{MS12}, is an open set obtained from a perturbation of a horseshoe, after which, either the stable or the unstable manifolds (or both) occupy a larger dimension than it previously had. We will here follow the definition of \cite{ACW2017}. \par

Let $\Lambda$ be a {\em horseshoe}, that is, a transitive, locally maximal hyperbolic set that is totally disconnected and not finite. Assume $f$ admits a dominated splitting over $\Lambda$ of the form $T_{\Lambda}M=E^{u}_{1}\oplus\dots\oplus E^{u}_{\ell}\oplus E^{s}$ so that $Df$ is contracting over $E^{s}$ and expanding over $E^{u}_{1}\oplus\dots\oplus E^{u}_{\ell}$. Given a small open ball $B$ that intersects $\Lambda$, a {\em well placed unstable $k$-strip} is any $k$-disc centered at a point in $B$ with radius much bigger than $B$ that is almost tangent to $E^{u}_{1}\oplus\dots\oplus E^{u}_{k}$, with $k=1, \dots, \ell$, where $\ell=\dim(M)-\dim (E^{s})$. We say that two open submanifolds $K,N$ intersect {\em quasi-transversely} at $z\in M$ if $z\in K\cap N$ and $T_{z}K\cap T_{z}N$ does not contain a non-zero vector. 

\begin{definition}[$s$-stable superblender] Let $\Lambda$ be a horseshoe admitting a dominated splitting of the form $T_{\Lambda}M=E^{u}_{1}\oplus\dots\oplus E^{u}_{\ell}\oplus E^{s}$ so that $Df$ is contracting over $E^{s}$ and expanding over $E^{u}_{1}\oplus\dots\oplus E^{u}_{\ell}$. Let $x\in\Lambda$. A small open ball $Bl^{s}_{\Lambda}(x)$ containing $x$ is an {\em $s$-stable superblender} associated to $\Lambda$ if:
\begin{itemize}
 \item For every $k=1,\dots, \ell$, every well placed $k$-strip in $Bl^{s}_{\Lambda}(x)$  quasi-transversely intersects $W^{s}(y)$ at some point $z\in M$, for some $y\in \Lambda$. 
 \item This property is $C^{1}$-robust
\end{itemize}
See Figure \ref{superblender}
\end{definition}

\begin{figure}[h]
\includegraphics[width=.8\textwidth]{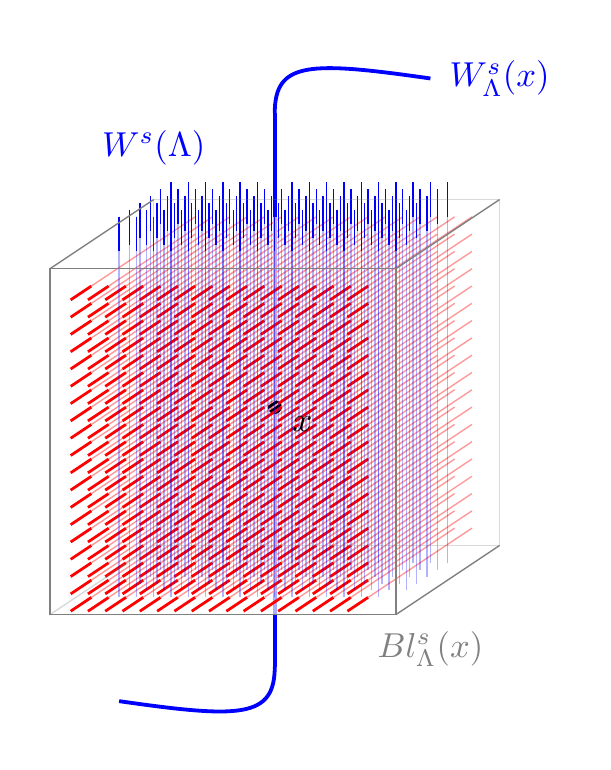}
\caption{\label{superblender} An $s$-stable superblender} 
\end{figure}

The following theorem is related to Theorem \ref{teo.jana.acw} and provides us with stable superblenders in our setting. It follows from \cite[Corollary D]{ACW2017}
\begin{theorem}[Creation of superblenders \cite{ACW2017}\label{thm.superblender}] Generically in $\diff^{1}_{m}(M)$ with positive metric entropy, there exists an $s$-stable superblender associated to a horseshoe $\Lambda$, where $s$ is the stable index of the periodic point $q$ given by Theorem \ref{teo.jana.acw} such that 
$$\phc(q)\zeroeq M.$$

\end{theorem}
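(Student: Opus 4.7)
The plan is to reduce the statement to \cite[Corollary D]{ACW2017}. The hypothesis of positive metric entropy, via the Ruelle inequality, forces at least one positive Lyapunov exponent on a positive measure set, so the generic $f$ falls in the second alternative of Theorem \ref{teo.jana.acw}: it is ergodic, admits a dominated zipped Oseledets splitting $TM=E^+\oplus E^-$, and has a hyperbolic periodic point $q$ with $u(q)=\dim E^+$ and $\phc(q)\zeroeq\nuh(f)\zeroeq M$. This is the input I feed into the ACW construction.

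First, using $\phc(q)\zeroeq M$ and Pesin's theory, I pick a recurrent point $x$ in a Pesin block of positive measure whose Pesin stable manifold meets $W^u(o(q))$ quasi-transversely. A Franks-style perturbation along a long segment of the returning orbit of $x$, supported in small disjoint balls and affecting only the directions not yet uniformly hyperbolic, shadows the orbit by a hyperbolic horseshoe $\Lambda$ of stable index $s=\dim E^-$. Using the generic domination of the Oseledets filtration on $E^+$ coming from \cite{bochi2002,ACW2017}, the perturbation can be chosen so that $\Lambda$ carries a finer dominated splitting $T_\Lambda M=E^u_1\oplus\cdots\oplus E^u_\ell\oplus E^s$; the cone fields attached to the partial sums $E^u_1\oplus\cdots\oplus E^u_k$ then define what it means for a $k$-disc to be well placed.

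Next, I would promote $\Lambda$ to a superblender via the Moreira--Silva mechanism \cite{MS12} in its volume-preserving $C^1$-generic incarnation from \cite{ACW2017}. A further small perturbation inside a ball $Bl^s_\Lambda(x)$ meeting $\Lambda$ installs, at each intermediate unstable dimension $k$, an iterated function system of contractions transverse to $E^u_1\oplus\cdots\oplus E^u_k$ whose limit set covers a definite portion of the complementary direction. A covering argument then forces every well placed $k$-strip in $Bl^s_\Lambda(x)$ to intersect $W^s(y)$ quasi-transversely for some $y\in\Lambda$. The cone conditions, expansion/contraction bounds, and the covering property are all $C^1$-open, so the superblender is automatically robust; a countable intersection of residual sets yields genericity.

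The main obstacle is that the superblender property must hold simultaneously for every $k=1,\ldots,\ell$, not just in one codimension: classical Bonatti--Diaz blenders cover a single codimension, but here an IFS must be installed along the entire Oseledets filtration. This is the technically demanding part of \cite{ACW2017} and the reason the conservative $C^1$-perturbations must be delicately layered so as to preserve volume and all previously arranged dominations. I would invoke Corollary D of \cite{ACW2017} as a black box precisely to avoid rewriting this construction, using only the verification that the data produced by Theorem \ref{teo.jana.acw} satisfy its hypotheses.
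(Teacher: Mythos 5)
Your proposal is correct and takes essentially the same route as the paper: the paper simply derives the statement from \cite[Corollary D]{ACW2017}, exactly the black-box reduction you settle on after checking (via Ruelle's inequality) that positive metric entropy places the generic $f$ in the non-zero-exponent alternative of Theorem \ref{teo.jana.acw}, so that the superblender's stable index matches that of $q$. Your intermediate sketch of the Katok-horseshoe and layered-IFS construction is inessential (and would need the care you yourself flag), but since you ultimately invoke Corollary D rather than reprove it, the argument coincides with the paper's.
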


Theorem \ref{thm.superblender} implies the existence of a horseshoe $\Lambda$ and an $s$-stable superblender $Bl^{s}_{\Lambda}(x_{0})$ associated to $\Lambda$, where $s$ is the stable index of $q$ and $x_{0}\in\Lambda$. The creation of superblenders strongly uses the volume preserving hypothesis. \par

Let $r$ be any hyperbolic periodic point in $\Lambda$. Then, the following theorem shows that $q$ and $r$ can be considered to be homoclinically related.

\begin{theorem}\cite{AC2012}\label{teo.abdenur.crovisier} Generically in $\diff^{1}_{m}(M)$, all periodic points of the same index are homoclinically related.
\end{theorem}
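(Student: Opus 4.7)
The plan is to combine the conservative connecting lemma of Arnaud and Bonatti--Crovisier with the chain-transitivity of $C^{1}$-generic volume preserving diffeomorphisms, and then conclude by a standard Baire category argument over the countable list of continuations of pairs of periodic orbits.

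First I would set up a countable parametrization. On a residual subset $\cR_{0}\subset \diff^{1}_{m}(M)$, the Kupka--Smale theorem in the volume preserving category (Robinson) makes every periodic point hyperbolic and gives it a $C^{1}$-continuation on a small neighborhood of $f$. Fix such an $f$ and two hyperbolic periodic orbits $o(p), o(q)$ with $u(p)=u(q)$. The goal becomes to show that, inside $\cR_{0}$, there is a residual subset on which $p_{g}$ and $q_{g}$ are homoclinically related, uniformly over the countable list of such pairs.

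The analytic engine is the conservative connecting lemma: a $C^{1}$-small volume preserving perturbation $g$ of $f$, supported in an arbitrarily small neighborhood of a pseudo-orbit, can realize an actual intersection between $W^{u}(p_{g})$ and $W^{s}(q_{g})$. Bonatti--Crovisier prove that for generic $f\in\diff^{1}_{m}(M)$ the whole manifold $M$ is a single chain recurrence class, so $\eps$-pseudo-orbits from $o(p)$ to $o(q)$ and back are available at every scale. Applying the connecting lemma twice produces a $g$ arbitrarily $C^{1}$-close to $f$ with $W^{u}(p_{g})\cap W^{s}(q_{g})\ne\emptyset$ and $W^{u}(q_{g})\cap W^{s}(p_{g})\ne\emptyset$. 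Because $u(p)=u(q)$, the invariant manifolds at each intersection have complementary dimensions, and a further Kupka--Smale style volume preserving perturbation turns each intersection into a transverse (here quasi-transverse of full dimension, i.e. transverse) one. The inclination or $\lambda$-lemma then gives that $p_{g}$ and $q_{g}$ are homoclinically related.

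Having a transverse heteroclinic cycle is a $C^{1}$-open condition once both continuations are defined, so the previous step yields, for each pair $(p,q)$, a $C^{1}$-open and dense subset of $\cR_{0}$ on which the continuations are homoclinically related. Enumerating the countably many pairs of periodic orbits and intersecting the corresponding open dense sets gives a residual subset of $\diff^{1}_{m}(M)$ on which all pairs of hyperbolic periodic points of the same index are homoclinically related, which is the statement. The main obstacle is precisely the conservative version of the connecting lemma: Hayashi's dissipative construction cannot be imported verbatim since every perturbation must preserve $m$, and the correct volume preserving (and symplectic) statement is due to Arnaud, Xia and Bonatti--Crovisier. Once this analytical input is granted, the remaining argument is the standard combination of Kupka--Smale genericity, persistence of transverse intersections, and the Baire category theorem.
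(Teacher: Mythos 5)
The paper gives no proof of this statement---it is quoted directly from \cite{AC2012}---and your sketch reconstructs essentially the argument of that reference and its Bonatti--Crovisier input: the conservative connecting lemma for pseudo-orbits, generic chain transitivity of $M$, conservative Kupka--Smale, robustness of transverse heteroclinic cycles, and a Baire argument over a countable parametrization of pairs of continuations of periodic orbits. The only point to tighten is the order of operations: you should make the first created intersection transverse (hence $C^{1}$-robust) \emph{before} applying the connecting lemma a second time, since otherwise the second perturbation, whose support you do not control along the first connecting orbit, could destroy that merely topological intersection.
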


Call $q_{g},r_{g}$ and $\Lambda_{g}$ the continuations, respectively, of $q,r$ and $\Lambda$. Then there exists a $C^{1}$-neighborhood, which we continue to call $\U$, so that $q_{g}$ and $r_{g}\in\Lambda_{g}$ are homoclinically related for all $g\in\U$. This implies that $q_{g}$ is homoclinically related to all periodic points in $\Lambda_{g}$. \par
Since $W_{f}$ is minimal, all leaves of $W_{f}$ contain a well-placed unstable $k$-strip in $Bl^{s}_{\Lambda}(x_{0})$, where $k=\dim W_{f}=u(p)$. The neighborhood $\U$ can be chosen so that all leaves of $W_{g}$ also contain a well-placed unstable $k$-strip in $Bl^{s}_{\Lambda}(x_{0})$ for every $g\in\U$. Since the superblender property holds, each of these $W_{g}$-leaves will quasi-transversely intersect the $W^{s}_{g}$-leaf of a point in $\Lambda_{g}$. \par
We will need the following lemmas
\begin{lemma}\label{lemma.phc}
If $g\in\diff^{2}_{m}(M)$, then 
$$\overline{W^{s}(q_{g})}=\ess{\phc_{g}(q_{g})}\qquad\text{and}\qquad \overline{W^{u}(q_{g})}=\ess{\phc_{g}(q_{g})}$$ 
\end{lemma}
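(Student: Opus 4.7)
By the time-reversal symmetry $g\mapsto g^{-1}$---under which $\phc_g(q_g)$ is preserved, since the conditions defining $\phc^+$ and $\phc^-$ simply swap roles---it suffices to prove $\overline{W^s(q_g)}=\ess{\phc_g(q_g)}$; the identity $\overline{W^u(q_g)}=\ess{\phc_g(q_g)}$ then follows by applying the same argument to $g^{-1}$.

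For the inclusion $\ess{\phc_g(q_g)}\subseteq\overline{W^s(q_g)}$, it suffices (as $\overline{W^s(q_g)}$ is closed) to check $\phc_g(q_g)\subseteq\overline{W^s(q_g)}$. For $x\in\phc_g(q_g)$ we have transverse intersections $W^+(x)\transv W^s(o(q_g))$ and $W^-(x)\transv W^u(o(q_g))$; since $g\in\diff^2_m(M)$, Katok's horseshoe theorem applied to this pair of transverse intersections produces a hyperbolic horseshoe $\Lambda_0$ containing a sequence of periodic points $p_n\to x$, each homoclinically related to $q_g$. The Inclination Lemma applied to the homoclinic relation yields $W^s(p_n)\subseteq\overline{W^s(o(q_g))}$, hence $p_n\in\overline{W^s(o(q_g))}$; letting $n\to\infty$ and iterating by $\per(q_g)$ to descend from the orbit to the base point itself delivers $x\in\overline{W^s(q_g)}$.

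For the reverse inclusion $\overline{W^s(q_g)}\subseteq\ess{\phc_g(q_g)}$, fix $x\in\overline{W^s(q_g)}$ and $\eps>0$; we must show $m(B_\eps(x)\cap\phc_g(q_g))>0$. Choose $y\in B_{\eps/2}(x)\cap W^s(q_g)$; near $y$, $W^s(q_g)$ is a $C^2$ disc of codimension $u(q_g)$ tangent to the stable bundle of $q_g$, and by the dominated splitting $E^+\oplus E^-$ this disc is transverse to $E^+$. Pesin theory for $g\in\diff^2_m$ supplies a positive-measure Pesin block $\Gamma$ on which the local Pesin unstable manifolds $W^+_{loc}(z)$ are tangent to $E^+(z)$, have uniform size, and vary continuously. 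Pesin's absolute continuity theorem applied to the $W^+$-holonomy between a small transversal near $y$ and $W^s_{loc}(q_g)$ yields a positive-$m$-measure set $A\subseteq B_\eps(x)\cap\Gamma$ with $A\subseteq\phc^+_g(q_g)$. A symmetric absolute continuity argument using the $W^-$-holonomy and a local piece of $W^u(q_g)$ passing through $B_\eps(x)$ produces a positive-measure subset of $A$ that also lies in $\phc^-_g(q_g)$, so $m(B_\eps(x)\cap\phc_g(q_g))>0$.

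The principal obstacle is in the second inclusion: to run the $W^-$-holonomy argument we need a local piece of $W^u(q_g)$ to pass near $x$, while $x$ was only assumed to lie in $\overline{W^s(q_g)}$. This is handled by running both inclusions simultaneously for $g$ and $g^{-1}$: the first inclusion applied to $g^{-1}$ gives $\ess{\phc_g(q_g)}\subseteq\overline{W^u(q_g)}$, and invoking that $\phc_g(q_g)$ is saturated by both $W^+$- and $W^-$-leaves forces $\overline{W^s(q_g)}$ and $\overline{W^u(q_g)}$ to coincide essentially, supplying the $W^u(q_g)$-leaves required by the symmetric absolute continuity argument.
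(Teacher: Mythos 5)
Your proposal does not follow the paper's route, and the route you chose has genuine gaps. The paper disposes of the hard inclusion $\overline{W^{s}(q_{g})}\subset\ess{\phc_{g}(q_{g})}$ by citing \cite[Lemma 4.2]{AB2012}, and gets the easy inclusion from $\phc_{g}(q_{g})\subset\overline{W^{s}(q_{g})}$ via the $\lambda$-lemma; you instead try to prove both inclusions from scratch, and the sketch breaks precisely at the points that make the cited lemma nontrivial. First, in the hard inclusion your absolute-continuity step is not actually an argument: the Pesin local unstable manifolds $W^{+}_{loc}(z)$ of points $z$ in a block $\Gamma$ have sizes that are only measurably positive, not uniform at the scale you need, and nothing guarantees that a block with $m(\Gamma\cap B_{\eps}(x))>0$ consists of points whose local leaves are long enough (and positioned correctly) to reach the disc of $W^{s}(q_{g})$ through $y$. ``Absolute continuity of the $W^{+}$-holonomy between a transversal near $y$ and $W^{s}_{loc}(q_{g})$'' presupposes that the leaves already span between those transversals, which is exactly what has to be produced; this is the content of \cite[Lemma 4.2]{AB2012} and cannot be waved through. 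Second, and more seriously, your treatment of the $\phc^{-}$ half is circular: you acknowledge that no piece of $W^{u}(q_{g})$ is known to pass near $x\in\overline{W^{s}(q_{g})}$, and you repair this by invoking that $\overline{W^{s}(q_{g})}$ and $\overline{W^{u}(q_{g})}$ ``coincide essentially''---but the only inputs available at that stage are the easy inclusions $\ess{\phc_{g}(q_{g})}\subset\overline{W^{s}(q_{g})}$ and $\ess{\phc_{g}(q_{g})}\subset\overline{W^{u}(q_{g})}$, which say nothing about either closure being contained in the other; the asserted coincidence is essentially the lemma you are trying to prove.

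There is also a weaker problem in your easy inclusion: Katok's horseshoe theorem is a statement about hyperbolic ergodic measures (or recurrent Pesin-regular points), not about an arbitrary point $x$ whose Pesin manifolds happen to intersect $W^{s}(o(q_{g}))$ and $W^{u}(o(q_{g}))$ transversally; a point of $\phc_{g}(q_{g})$ need not be accumulated by periodic orbits, so ``$p_{n}\to x$'' is unjustified as stated. (This half could be rescued: since only the essential closure matters, an almost-everywhere statement suffices, and on $\phc_{g}(q_{g})$ one has ergodicity and hyperbolicity by the criterion of \cite{HHTU11}, so Katok's theorem does apply to $m$-a.e.\ point; alternatively, the paper's direct use of the $\lambda$-lemma avoids periodic approximations altogether. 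Also note that passing from $\overline{W^{s}(o(q_{g}))}$ to $\overline{W^{s}(q_{g})}$ requires the points of the orbit of $q_{g}$ to be homoclinically related to one another, not merely ``iterating by $\per(q_{g})$''.) As it stands, however, the hard inclusion is not proved, so the proposal does not establish the lemma.
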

\begin{proof}
 The inclusion 
 $\overline{W^{s}(q_{g})}\subset\ess{\phc_{g}(q_{g})}$ can be found, for instance, in \cite[Lemma 4.2]{AB2012}. The other inclusion follows easily from the fact that 
$$\phc(q_{g})\subset \overline{W^{s}(q_{g})}$$
which in turn follows from the $\lambda$-lemma.
\end{proof}

Also from the $\lambda$-lemma, we deduce the following
\begin{lemma} \label{lemma.ws} For every $g\in\U\cap\diff^{2}_{m}(M)$
$$W^{s}(\Lambda_{g})\subset \overline{W^{s}(q_{g})}=\ess{\phc_{g}(q_{g})}$$
\end{lemma}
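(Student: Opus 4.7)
The plan is to reduce the statement to three already-established facts: the equality $\overline{W^{s}(q_{g})}=\ess{\phc_{g}(q_{g})}$ from Lemma \ref{lemma.phc}; the $\lambda$-lemma; and the observation (explicitly noted in the paragraph before the statement) that for every $g\in\U$ the continuation $q_{g}$ is homoclinically related to every periodic point of $\Lambda_{g}$, which comes from Theorem \ref{teo.abdenur.crovisier} combined with persistence of homoclinic relations under $C^{1}$ perturbation. The second equality in the statement is then immediate from Lemma \ref{lemma.phc}, so the real task is to prove the inclusion $W^{s}(\Lambda_{g})\subset\overline{W^{s}(q_{g})}$.

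Next I would fix an arbitrary $z\in W^{s}(\Lambda_{g})$, so that $z\in W^{s}(y)$ for some $y\in\Lambda_{g}$, and I would approximate $y$ by periodic points. Since $\Lambda_{g}$ is a horseshoe its periodic points are dense, so I can pick a sequence of periodic points $r_{n}\in\Lambda_{g}$ with $r_{n}\to y$. Every such $r_{n}$ shares the hyperbolic index of $q_{g}$, because the dominated splitting on $\Lambda_{g}$ forces the periodic splitting at each $r_{n}$ to match that of $q_{g}$. The homoclinic relation between $q_{g}$ and $r_{n}$ provides a transverse intersection of $W^{u}(q_{g})$ with $W^{s}(r_{n})$, and the $\lambda$-lemma applied at this intersection gives the leafwise accumulation $W^{s}(r_{n})\subset\overline{W^{s}(q_{g})}$ for every $n$.

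To finish I must show that $z$ itself belongs to the closure of $\bigcup_{n}W^{s}(r_{n})$, and here I would use continuity of local stable manifolds on a hyperbolic set. Choose $k\ge 0$ large enough so that $g^{k}(z)$ lies in $W^{s}_{\mathrm{loc}}(g^{k}(y))$; for $n$ large, $g^{k}(r_{n})$ is close to $g^{k}(y)$, so $W^{s}_{\mathrm{loc}}(g^{k}(r_{n}))$ is $C^{1}$-close to $W^{s}_{\mathrm{loc}}(g^{k}(y))$. Picking a point on $W^{s}_{\mathrm{loc}}(g^{k}(r_{n}))$ near $g^{k}(z)$ and pulling back $k$ iterations produces a point $z_{n}\in W^{s}(r_{n})$ with $z_{n}\to z$, which together with the previous paragraph yields $z\in\overline{W^{s}(q_{g})}$. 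The only mildly delicate step is this last one, where one must bridge the gap between continuity of stable manifolds on $\Lambda_{g}$ (naturally stated for local stable manifolds) and the fact that $z$ may lie arbitrarily far from $y$ along $W^{s}(y)$; forward iteration dispenses with the difficulty. Nothing in the argument requires $g\in\diff^{2}_{m}(M)$ apart from the invocation of Lemma \ref{lemma.phc}.
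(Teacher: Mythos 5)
Your argument is essentially the one the paper intends: the paper gives no written proof beyond ``also from the $\lambda$-lemma'', relying on the preceding observation that $q_{g}$ is homoclinically related to every periodic point of $\Lambda_{g}$, and your fleshing-out via density of periodic points in the horseshoe, continuity of local stable manifolds along $W^{s}(y)$ after iterating forward, and Lemma \ref{lemma.phc} for the equality is the natural way to make that one-line assertion precise. One slip should be corrected: to conclude $W^{s}(r_{n})\subset\overline{W^{s}(q_{g})}$ you must apply the $\lambda$-lemma at a point of $W^{s}(q_{g})\pitchfork W^{u}(r_{n})$, iterating a disc of $W^{s}(q_{g})$ backwards (under a power of $g$ fixing both points) so that it accumulates on $W^{s}(r_{n})$; the intersection you actually invoke, $W^{u}(q_{g})\pitchfork W^{s}(r_{n})$, gives instead $W^{u}(r_{n})\subset\overline{W^{u}(q_{g})}$ (or, iterating backwards, $W^{s}(q_{g})\subset\overline{W^{s}(r_{n})}$), which is not the inclusion you need. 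Since the homoclinic relation is symmetric and supplies both transverse intersections, the repair is immediate, and the remainder of your argument---including the final step of picking a point of $W^{s}_{\mathrm{loc}}(g^{k}(r_{n}))$ near $g^{k}(z)$ and pulling back $k$ iterates to approximate $z$---is sound.
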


To finish the proof we will need two more lemmas:

\begin{lemma}[Lemma 6.1 \cite{JRH2012}]\label{lemma.jana} If $g\in\diff^{2}_{m}(M)$ and $K$ is a $g$-invariant set such that $K=\ess{K}$ and $m(K)>0$, then, for every $x\in K$
$$W^{s}(x)\cup W^{u}(x)\subset K$$
\end{lemma}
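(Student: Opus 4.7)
The plan is to prove $W^{s}(x) \subset K$; the inclusion $W^{u}(x) \subset K$ follows symmetrically by replacing $g$ with $g^{-1}$. The main tool I would use is Pesin's absolute continuity theorem for the stable foliation of a $C^{2}$ volume-preserving diffeomorphism, which is precisely why the hypothesis $g \in \diff^{2}_{m}(M)$ is essential.

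First I would pass to the subset $K' \subset K$ of good points: those that are Pesin regular (so that $W^{s}_{loc}$ is an embedded disc of uniform size on some Pesin block containing the point) and at which $K$ has Lebesgue density one. One has $m(K')=m(K)$. For $x\in K'$ in a Pesin block $R_{\ell}$, I would choose a foliation box $V\cong T\times D$ at $x$ whose plaques are local stable manifolds and $T$ is a smooth transversal in the unstable direction. Absolute continuity of $W^{s}$ gives a Fubini-type identity $m(V\cap K)=\int_{T}\mu^{s}_{z}(W^{s}_{loc}(z)\cap K)\,d\nu(z)$, where $\mu^{s}_{z}$ is leafwise Lebesgue measure and $\nu$ is equivalent to Lebesgue on $T$. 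The density-one condition at $x$ forces, after shrinking $V$, that on a density-one set of $z\in T$ one has $\mu^{s}_{z}(W^{s}_{loc}(z)\cap K)=\mu^{s}_{z}(W^{s}_{loc}(z))$. Selecting $x$ from this density-one subset (which I may, by refining $K'$) yields $\mu^{s}(W^{s}_{loc}(x)\setminus K)=0$. Then for any $y\in W^{s}_{loc}(x)$ and any $\eps>0$, applying Fubini in a foliation box around $y$ and using that a density-one set of plaques through it has $K$ of full leafwise measure yields $m(B_{\eps}(y)\cap K)>0$, whence $y\in\ess{K}=K$. Thus $W^{s}_{loc}(x)\subset K$, and by $g$-invariance of $K$ together with $W^{s}(x)=\bigcup_{n\geq 0}g^{-n}(W^{s}_{loc}(g^{n}(x)))$, this extends to $W^{s}(x)\subset K$ for every $x\in K$.

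The main obstacle is the absolute continuity step. The Fubini-like description of $m$ on foliation boxes of $W^{s}$ in terms of leafwise Lebesgue measure and a transversal quotient measure is due to Pesin and requires $g\in\diff^{2}_{m}(M)$; it can fail for $C^{1}$ diffeomorphisms. Without it, one cannot pass from ``$x$ is an $m$-density point of $K$'' to ``$W^{s}_{loc}(x)\cap K$ has full leafwise Lebesgue measure'', and the remainder of the argument collapses. This is exactly why the hypothesis $g\in\U\cap\diff^{2}_{m}(M)$ appears in the conclusion of Theorem \ref{theorem.B}.
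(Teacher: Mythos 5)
The paper itself gives no proof of this lemma (it is quoted from \cite{JRH2012}), so I can only judge your argument on its own terms. Your overall strategy --- Pesin absolute continuity of the stable lamination plus a density-point/Fubini argument, then the symmetric argument for $g^{-1}$ --- is the natural route and correctly explains the role of the $C^{2}$ hypothesis. But there is a genuine gap: you prove the conclusion only for $x$ in a full-measure subset $K'\subset K$ (Pesin-regular density points, further refined so that the leafwise density argument applies), whereas the lemma asserts it for \emph{every} $x\in K$, and the difference matters here. In the proof of Theorem \ref{theorem.B} the lemma is applied at a point $y$ obtained purely topologically, as a quasi-transverse intersection of a $W_{g}$-leaf with $W^{s}(\Lambda_{g})\subset\ess{\phc_{g}(q_{g})}$ (Lemma \ref{lemma.ws}); nothing guarantees that this $y$ is a density point of $K$, is Lyapunov regular, or lies in any Pesin block, so an ``almost every $x$'' version of the lemma would not close that argument. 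For such an arbitrary $x\in K$, $W^{s}(x)$ is only the dynamically defined set of (\ref{eq.invariant.manifolds}); there is no foliation box of uniform-size local stable manifolds at $x$, and it is not clear how to recover the full statement from your a.e.\ version (the Pesin stable discs of nearby generic points of $K$ have no uniform size and need not shadow $W^{s}(x)$ near the point one wants to reach). An elementary pull-back of balls does not repair this either: the inner radius of $g^{n}(B_{\eps}(y))$ can shrink strictly faster than $d(g^{n}x,g^{n}y)$ (think of a weak and a strong stable direction), so one cannot simply capture $g^{n}x$ inside $g^{n}(B_{\eps}(y))$.

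Two smaller points. First, the claim $m(K')=m(K)$ tacitly assumes that almost every point of $K$ has some negative exponents and a local stable disc of uniform size on a Pesin block; $K$ is just an invariant set of positive measure, so this needs justification (or at least a discussion of points where $E^{-}$ is trivial, for which $W^{s}$ should be shown to be trivial). Second, the Fubini identity you write over a full foliation box $V\cong T\times D$ is not literally what Pesin's absolute continuity provides: the local stable manifolds laminate only the measurable, positive-measure part of the box through a Pesin block, so the disintegration must be taken over that partial lamination; moreover, knowing that $K$ has full leafwise measure in the single plaque $W^{s}_{loc}(x)$ says nothing by itself about $m(B_{\eps}(y)\cap K)$, since one leaf is $m$-null --- you need a transversally positive-measure family of plaques, each with leafwise $K$-density close to $1$ and each passing through $B_{\eps}(y)$, which is where the smallness of the box at the density point $x$ and the continuity of plaques on the block must be invoked explicitly. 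These last issues are fixable with standard care, but the every-versus-almost-every issue is a real gap relative to the statement and to its use in the paper.
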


\begin{lemma} \cite{AB2012} \label{avila.bochi} For a generic $f\in\diff^{1}_{m}(M)$ if $q$ is the hyperbolic periodic point of Theorem \ref{teo.jana.acw}, for every $\eps>0$ there exists a $C^{1}$-neighborhood $\U(f)$ of $f$ such that for all $C^{2}$ diffeomorphisms $g$ in $\U(f)$:
 $$m(\phc_{g}(q_{g}))>1-\eps$$
 where the hyperbolic periodic point $q_{g}$ is the continuation of $q$. 
\end{lemma}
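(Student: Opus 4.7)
The plan is to exhibit at $f$ a compact Pesin block of almost full measure whose points belong to $\phc_{f}(q)$, transfer it to a comparable block for nearby $g \in \diff^{2}_{m}(M)$ using the $C^{1}$-stability of the dominated Oseledets splitting, and verify that the local Pesin manifolds of those points still transversely meet $W^{u/s}(o(q_g))$.

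Step one: by Theorem \ref{teo.jana.acw}, genericity of $f$ gives $\phc_{f}(q)\zeroeq\nuh(f)\zeroeq M$ together with a dominated zipped Oseledets splitting $TM=E^{+}\oplus E^{-}$. Fix $\eps>0$. Since the splitting is dominated, $W^{+}(x)$ and $W^{-}(x)$ are immersed manifolds for $m$-a.e.\ $x$, and Pesin theory yields a compact regular set $K\subset\nuh(f)$ with $m(K)>1-\eps/3$ on which $E^{\pm}$ are continuous and the local Pesin manifolds $W^{\pm}_{\mathrm{loc}}(x)$ have uniform size. Since $\phc_{f}(q)$ has essentially full measure, the transversality conditions $W^{-}(x)\transv W^{u}(o(q))\ne\emptyset$ and $W^{+}(x)\transv W^{s}(o(q))\ne\emptyset$ hold on a subset of $K$ of full measure; shrinking $K$ to a compact $K_{0}\subset K\cap\phc_{f}(q)$ with $m(K_{0})>1-\eps/2$, we may further assume that the transverse intersection points occur at uniformly bounded distance along $W^{\pm}(x)$.

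Step two: the sub-bundles $E^{\pm}(g)$ persist on a $C^{1}$-neighborhood $\U(f)$ by the $C^{1}$-openness of dominated splittings, and vary continuously in $g$. For $g\in\U(f)\cap\diff^{2}_{m}(M)$, Pesin theory applied to $g$, together with a Pliss-type argument that transfers the uniform hyperbolic estimates from $f$ to nearby $g$, produces a regular set $K_{0}(g)$ with $m(K_{0}(g))>1-\eps$ on which Pesin manifolds of comparable size persist by the graph-transform invariant manifold theorem. Concurrently, $W^{u/s}(o(q_g))$ depend continuously on $g$ in the $C^{1}$-topology on compact sets, so the transverse intersections that certified $K_{0}\subset\phc_{f}(q)$ persist and certify $K_{0}(g)\subset\phc_{g}(q_g)$. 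Hence $m(\phc_{g}(q_g))\ge m(K_{0}(g))>1-\eps$.

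The main obstacle lies in step two: showing that the Pesin regular set varies lower semi-continuously in measure under $C^{1}$ perturbations. Classical Pesin theory requires $C^{1+\alpha}$ regularity, and the estimates that control the size of invariant manifolds can degrade under a $C^{1}$ perturbation. The decisive feature here is that the Oseledets splitting is dominated: the uniform control afforded by domination (absent in the general non-uniform $C^{1}$ setting) stabilizes the Pesin estimates on a set of nearly full measure, which is precisely the technical content of \cite{AB2012}.
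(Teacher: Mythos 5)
There is a genuine gap, and it sits exactly where you place the weight of the argument. The paper does not reprove this statement at all: its proof is a two-line citation of Lemma~5.1 of \cite{AB2012} combined with the generic fact from \cite{AC2012} that all periodic points of the same unstable index are homoclinically related. You instead try to give a direct proof, which would be fine, except that your Step two asserts rather than proves the decisive estimate. The mechanism you describe --- fix a compact Pesin block $K_{0}$ for $f$ of measure $>1-\eps/2$ and claim that, for $g$ $C^{1}$-close, its points remain regular with invariant manifolds ``of comparable size'' whose transverse intersections with $W^{u/s}(o(q_{g}))$ persist --- is false as stated. Pointwise nonuniformly hyperbolic data is not stable under perturbation: a point $x\in K_{0}$ need not belong to $\nuh(g)$ at all, its Lyapunov exponents for $g$ can degenerate, and even when $W^{\pm}_{g}(x)$ exist they need not be close to $W^{\pm}_{f}(x)$ nor of uniform size. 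Domination only gives continuity of the bundles $E^{\pm}(g)$, not of the hyperbolicity times of individual orbits. The correct argument (the content of \cite{AB2012}) never transfers a block point by point; it works with $g$ directly, using that the generic $f$ is a continuity point of the integrated Lyapunov exponents, so that for nearby $g$ the set of points with uniform Pliss-type estimates (hence uniformly large invariant manifolds, thanks to the dominated splitting) still has measure $>1-\eps$. Your closing paragraph concedes that this semicontinuity ``is precisely the technical content of \cite{AB2012}'', which means the one step that actually needs proving in your outline is being discharged by the citation --- so the proposal is a plausible sketch of why the cited lemma is true, not a proof of the statement.

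A second, smaller omission: even granting a large-measure hyperbolic set for $g$ with uniformly large Pesin manifolds, you must explain why it lies in $\phc_{g}(q_{g})$ for the \emph{specific} point $q_{g}$, rather than in the Pesin homoclinic class of some other periodic orbit. Your certificate for this is again the persistence of the individual transverse intersections attached to points of $K_{0}$, which fails for the reason above. The paper handles this by invoking \cite{AC2012}: generically all periodic points of the same index are homoclinically related at $f$, these homoclinic relations persist for the continuations under $C^{1}$ perturbation, and the $\lambda$-lemma then identifies (mod $0$) the large-measure class supplied by Lemma~5.1 of \cite{AB2012} with $\phc_{g}(q_{g})$. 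Some step of this kind is needed in any complete argument and is absent from your proposal.
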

\begin{proof} It follows from Lemma 5.1. in \cite{AB2012} and the fact that generically in $\diff^{1}_{m}(M)$ all points of the same unstable index are homoclinically related \cite{AC2012}.
\end{proof}

The following {\em criterion for ergodicity} is crucial and implies that $\phc_{g}(q_{q})$ is an ergodic component:

\begin{theorem}[Theorem A, \cite{HHTU11} Criterion for ergodicity]\label{criterion_hhtu} Let $f:M \rightarrow M$ be a $C^2$-diffeomorphism over a closed connected Riemannian manifold $M$, let $m$ be a smooth invariant measure and $p \in \pper_H(f)$. If $m(\phc^{+}(p))>0$ and $m(\phc^{-}(p))>0$, then
\begin{enumerate}
	\item $\phc^{+}(p) \circeq \phc^{-}(p) \circeq \phc(p)$.
	\item $m|\phc(p)$ is ergodic.
	\item $\phc(p) \subset \nuh(f)$.
\end{enumerate}
\end{theorem}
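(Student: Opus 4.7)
The plan is to adapt Hopf's classical ergodicity argument to the Pesin / non-uniformly hyperbolic setting. Because $f \in \diff^2_m(M)$, Pesin's theory supplies genuine invariant manifolds $W^\pm(x)$ for $m$-almost every $x$ and absolute continuity of the associated holonomies on each Pesin block. I would first dispose of item (3) by a dimension count: for $x \in \phc^+(p) \cap \phc^-(p)$, the transverse intersections $W^+(x) \transv W^s(o(p))$ and $W^-(x) \transv W^u(o(p))$ force $\dim W^+(x) \geq u(p)$ and $\dim W^-(x) \geq s(p)$. Since $T_x W^+(x) \subset E^+_x$ and $T_x W^-(x) \subset E^-_x$ lie in complementary Oseledets subspaces whose dimensions add to at most $\dim M = u(p)+s(p)$, both inequalities must be equalities and the zero-exponent subspace $E^0_x$ is trivial; hence $\phc(p) \subset \nuh(f)$.

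For the ergodicity and the essential identification of $\phc^\pm(p)$, I would run a Hopf argument. Fix a continuous $\varphi$, let $\varphi^\pm$ be the forward and backward Birkhoff averages, and let $R \subset M$ be the full-measure set on which both exist and coincide. Pick $x_0 \in \phc^+(p) \cap R$ and $z_0 \in W^+(x_0) \transv W^s(o(p))$; since $f^n(z_0) \to o(p)$, the inclination lemma gives that $f^n(W^+(x_0))$ accumulates in $C^1$ on $W^u(o(p))$ near $o(p)$. As $\varphi^+$ is $f$-invariant and constant on $W^+$-leaves, combining this $C^1$ accumulation with absolute continuity of the (uniformly hyperbolic) stable holonomy through $W^u(o(p))$ forces $\varphi^+(x_0)$ to equal the constant value of $\varphi^+$ along $W^u(o(p))$. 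Thus $\varphi^+ \equiv c^+$ $m$-a.e.\ on $\phc^+(p)$, and symmetrically $\varphi^- \equiv c^-$ a.e.\ on $\phc^-(p)$.

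To upgrade this to items (1) and (2) proper, apply the constancy above to a countable family of continuous functions dense in $C^0(M)$. Since $\varphi^+ = \varphi^-$ a.e.\ on $M$ by Birkhoff, one obtains $\varphi^- \equiv c^+$ a.e.\ on $\phc^+(p)$; because $\varphi^-$ is constant on $W^-$-leaves, the level set $\{\varphi^- = c^+\}$ is essentially $W^-$-saturated and has positive measure. Running the accumulation-and-absolute-continuity argument in the backward direction, applied to $W^-$-leaves of its points meeting $W^u(o(p))$, places $m$-a.e.\ $x \in \phc^+(p)$ inside $\phc^-(p)$. The symmetric argument yields the reverse inclusion, so $\phc^+(p) \zeroeq \phc^-(p) \zeroeq \phc(p)$ and $c^+ = c^-$. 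Ergodicity of $m|\phc(p)$ then follows because every continuous observable has a constant Birkhoff average on $\phc(p)$.

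The delicate step is the \emph{spreading} of $\varphi^\pm$ carried out in paragraphs two and three. Pesin's absolute continuity is only local, with quantitative constants depending on the Pesin block, so one must restrict attention to a block where a definite proportion of the accumulating disks lies and then combine this with absolute continuity of the genuinely hyperbolic invariant foliations of $o(p)$. Converting the topological transversality of $W^+(x)$ with $W^s(o(p))$ (and analogously $W^-(x)$ with $W^u(o(p))$) into a quantitative synchronization of Birkhoff averages is the main technical obstacle; once it is in hand, the remainder of the proof is bookkeeping.
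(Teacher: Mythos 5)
Note first that the paper you are working from does not prove this statement at all: it is imported verbatim as Theorem~A of \cite{HHTU11}, so the comparison to make is with that reference, whose proof is indeed the non-uniform Hopf argument you outline (Pesin manifolds, absolute continuity of the holonomies on Pesin blocks, the inclination lemma). Your dimension count for item (3) is essentially correct at Oseledets-regular points (hence as a mod~$0$ statement), once one observes that under the $C^2$ hypothesis $W^{\pm}(x)$ are the Pesin manifolds tangent to $E^{\pm}_x$, so the transversality with $W^{s}(o(p))$ and $W^{u}(o(p))$ forces $\dim E^{+}_x\geq u(p)$, $\dim E^{-}_x\geq s(p)$ and $E^{0}_x=\{0\}$.

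For items (1) and (2), however, there is a genuine gap, which you in fact flag yourself in your last paragraph: the ``spreading'' step is the entire content of the theorem, and it is not carried out. Two concrete problems. First, an index slip: the forward average $\varphi^{+}$ is constant on $W^{-}$-leaves (stable sets) and the backward average $\varphi^{-}$ is constant on $W^{+}$-leaves, so your second paragraph, which uses constancy of $\varphi^{+}$ along $W^{+}(x_0)$ and along $W^{u}(o(p))$, invokes properties that are not available. What one gets for free is that $\varphi^{-}$ is constant on $W^{u}(o(p))$ (equal to the orbit average over $o(p)$), and that for $x\in\phc^{-}(p)$ one has $\varphi^{+}(x)=\varphi^{+}(z)$ at an intersection point $z\in W^{-}(x)\cap W^{u}(o(p))$. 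Second, and more seriously, to conclude anything from such identities you would need to know that $\varphi^{+}=\varphi^{-}$ at leaf-volume-almost every point of the specific leaves $W^{u}(o(p))$, $W^{s}(o(p))$; these leaves have $m$-measure zero, so Birkhoff and Oseledets typicality give no information on them, and your appeal to ``the constant value of $\varphi^{+}$ along $W^{u}(o(p))$'' is exactly the unproved point. Bridging it is where both hypotheses $m(\phc^{+}(p))>0$ and $m(\phc^{-}(p))>0$ and the $C^{2}$ regularity must enter: one works with density points of Pesin blocks inside $\phc^{\pm}(p)$, uses the uniform size of local invariant manifolds on a block, pushes them with the inclination lemma so that they accumulate with uniform geometry on $W^{u}(o(p))$ (respectively $W^{s}(o(p))$), and then uses absolute continuity of the Pesin holonomies together with smoothness of $m$ to transport the almost-everywhere constancy between these positive-measure families and the invariant manifolds of $o(p)$, in both directions, to obtain $\phc^{+}(p)\zeroeq\phc^{-}(p)\zeroeq\phc(p)$ and constancy of the averages there. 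This is precisely the technical core of \cite{HHTU11}; deferring it as ``the main technical obstacle'' after which ``the remainder is bookkeeping'' leaves the theorem unproven.
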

The notation $A\zeroeq B$ means $m(A\triangle B)=0$.

Now we are in conditions to finish the proof. Let $x\in M$ then, as stated above, $W_{g}(x)$ contains a well placed unstable $k$-strip in $Bl^{s}_{\Lambda}(x_{0})$, where $k=u(p)$, the unstable index of $p$. Since the superblender property holds, this $k$-strip (and hence $W_{g}(x)$) quasi-transversely intersects the $W^{s}_{g}$-leaf of a point in $\Lambda_{g}$. 
Therefore, by Lemma \ref{lemma.ws} $W_{g}(x)$ intersects $\ess{\phc_{g}(q_{g})}$ at a point $y$. But by Lemmas \ref{lemma.jana} and \ref{avila.bochi}, $W_{g}(x)=W_{g}(y)\subset W^{u}(y)$ is contained in $\ess{\phc_{g}(q_{g})}$; hence, $x\in \ess{\phc_{g}(q_{g})}$. This finishes the proof of Theorem \ref{theorem.B}.

\section{Proof of Theorem \ref{teo.stable.minimality}} \label{section.thm.A}

We will use the following criterion of minimality. Some of the ideas of this proposition were already present in \cite{BDU2002}. 

\begin{proposition}[Minimality Criterion] \label{teo.minimality} Given a diffeomorphism $f\in\diff^{1}_{m}(M)$, an expanding $f$-invariant foliation $W$, and a hyperbolic periodic point $p\in\pper(f)$ such that 
\begin{enumerate}
\item the unstable index of $p$, $u(p)$ equals $\dim W$ 
\item $\phc^{W}(p)=M$
\item $\overline{W(p)}=M$ 
\end{enumerate}
 Then $W$ is a minimal foliation.
\end{proposition}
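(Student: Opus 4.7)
The plan is to fix an arbitrary $x\in M$ and show $A:=\overline{W(x)}=M$. The three ingredients are $W$-saturation of leaf-closures, passing to a minimal closed $W$-saturated subset $A'\subseteq A$ and showing $f^q(A')=A'$ for some $q\geq 1$, and then a convergence argument along $W^s(o(p))$ inside $A'$ to force $A'$ to contain a point of $o(p)$; hypothesis (3) finishes the proof.

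First, $A$ is $W$-saturated. Since $W$ is tangent to the continuous $Df$-invariant expanding subbundle $E=TW$, the foliation $W$ is continuous. If $y\in A$, a local product chart around $y$ presents plaques of $W$ as a continuous family, so the plaque of $W(y)$ is a $C^0$-limit of plaques of $W(x)$ accumulating at $y$ and thus lies in $A$. Covering the leaf $W(y)$ by finitely many such charts propagates $W(y)\subseteq A$. The same argument shows any leaf-closure is $W$-saturated.

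Next, extract a minimal closed nonempty $W$-saturated $A'\subseteq A$ by Zorn. Two minimal closed $W$-saturated subsets of $M$ are equal or disjoint: a shared point's leaf-closure is itself closed and $W$-saturated, and equals each by minimality. Hence the $f$-orbit $\{f^n A'\}$ consists of pairwise equal-or-disjoint minimal sets, all of common $m$-measure by volume preservation. If $m(A')>0$, the orbit must be finite --- otherwise $\sum_n m(f^n A')=\infty$ in a finite-volume manifold --- yielding $f^q(A')=A'$ for some $q\geq 1$. I expect the measure-zero case to be the main technical obstacle: one must exclude nowhere-dense minimal $W$-saturated subsets under hypotheses (1)--(3). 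This should follow by combining density of $W(p)$ from (3) with the abundance of transverse intersections from (2), perhaps via the $\lambda$-lemma --- which thanks to $u(p)=\dim W$ from (1) identifies the local unstable manifold along $o(p)$ with the local $W$-leaf --- to build positive-measure blocks inside $A'$, or by using connectedness of $M$ to rule out countable partitions into invariant minimal pieces of measure zero.

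Once $A'$ is $f^q$-invariant, hypothesis (2) applied to any $x'\in A'$ yields a transverse intersection $y'\in W(x')\cap W^s(f^k p)\subseteq A'$. Since $A'$ is $f^{q\cdot\per(p)}$-invariant, $f^{mq\cdot\per(p)}(y')\in A'$ for all $m\geq 0$, and these iterates converge to $f^k p$ (because $y'\in W^s(f^k p)$); closedness of $A'$ yields $f^k p\in A'$. Then $W$-saturation of $A'$ gives $W(f^k p)\subseteq A'$, and applying (3) together with $f^k$ being a homeomorphism, $\overline{W(f^k p)}=f^k(\overline{W(p)})=f^k(M)=M$. Hence $A'=M$, and in particular $A=M$.
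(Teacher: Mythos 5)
The pivotal step of your argument---producing $q\geq 1$ with $f^{q}(A')=A'$---is not established, and you say so yourself. Your only route to periodicity of the orbit $\{f^{n}A'\}$ is the volume argument, which requires $m(A')>0$; but a minimal closed $W$-saturated set has no reason to carry positive volume (a compact leaf, or an exceptional minimal set of the foliation, typically has zero Lebesgue measure), and nothing in hypotheses (1)--(3) rules this out directly. The remedies you sketch do not close the hole: before you know $A'$ is invariant under some power of $f$ you cannot ``build positive-measure blocks inside $A'$'' by iterating, since the $\lambda$-lemma produces sets accumulating on $W^{u}(o(p))$, not sets you can keep inside the non-invariant $A'$; and connectedness of $M$ does not forbid measure-zero minimal $W$-saturated sets (it does not even forbid uncountably many of them). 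Since the whole endgame---forward iterates of $y'\in W(x')\cap W^{s}(f^{k}p)$ staying in $A'$ and converging to $f^{k}p$---depends on this invariance, the proof as written does not go through. The parts you do complete ($W$-saturation of leaf closures, Zorn extraction, equal-or-disjoint minimal sets, and the final convergence once invariance is granted) are fine.

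For contrast, the paper's proof avoids minimal sets and invariance issues altogether: it first upgrades hypothesis (2) to ``every leaf meets $W^{s}(p)$ itself'' via density of $W(f^{k}(p))$ and the $\lambda$-lemma; then a compactness argument on the nested closed $W$-saturated sets $\Lambda_{K}=\{x: W(x)\cap W^{s}_{K}(p)=\emptyset\}$ gives a uniform $K$ with $W(x)\pitchfork W^{s}_{K}(p)\neq\emptyset$ for all $x$; applying high powers of $f^{\per(p)}$ (which contract $W^{s}_{K}(p)$ into $W^{s}_{\eps}(p)$ while permuting leaves) gives, for every $\eps>0$, a point of $W(x)$ at distance less than $\eps$ from $p$; finally, continuity of the foliation in compact plaques $W_{K}(\cdot)$ together with hypothesis (3) makes every leaf $\eps$-dense for every $\eps$. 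If you want to salvage your approach, the cleanest fix is to abandon the minimal-set reduction and run this kind of uniformity-plus-contraction argument on $A=\overline{W(x)}$ directly.
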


\begin{proof}
\begin{step} The unstable manifold of each point in $x$ not only intersects $W^{s}(o(p))$, but $W^{s}(p)$ itself, that is:
$$\phc^{W}(p)=\{x:W(x)\transv W^{s}(p)\ne\emptyset\}=M$$ 
By invariance of transversality, and since $\phc^{W}(p)=M$, the foliation $W$ is transverse to $W^{s}(f^{k}(p))$ for all $k=0,\dots,\per(p)-1$. But since $W(p)$ is dense in $M$, then $W(f^{k}(p))$ is dense for all $k=0,\dots,\per(p)-1$. This implies that $W(f^{k}(p))\transv W^{s}(p)\ne\emptyset$ for all $k$. Now, for any $x\in M$ there exists a $k=0,\dots,\per(p)-1$ such that $W(x)\transv W^{s}(f^{k}(p))\ne\emptyset$. Since $W(f^{k}(p))\transv W^{s}(p)\ne\emptyset$, the $\lambda$-lemma implies that for some iterate $m$ which is multiple of $\per(f)$, we have $W(f^{m}(x))\transv W^{s}(p)\ne\emptyset$. Applying $f^{-m}$ to the previous intersection, and since $W^{s}(p)$ is $f^{m}$-invariant, we get that $W(x)\transv W^{s}(p)\ne\emptyset$.
\end{step}
\begin{step}
 There exists $K>0$ such that for all $x$, $W(x)\transv W^{s}_{K}(p)\ne\emptyset$.\newline
Here we call $W^{s}_{K}(p)$ the set of points that can be joined to $p$ inside $W^{s}(p)$ by an arc of length less than $K$, for each $K>0$. Indeed, let 
$$\Lambda_{K}=\{x: W(x)\cap W^{s}_{K}(p)=\emptyset\}$$
Then $\Lambda_{K}$ is a compact $W$-saturated set. And $\Lambda_{K+1}\subset \Lambda_{K}$ for all $K\in \Z^{+}$. Call $\Lambda=\bigcap_{K\in\Z^{+}}\Lambda_{K}$. If $\Lambda_{K}$ were non-empty for all $K>0$ then $\Lambda\ne\emptyset$. But this is a contradiction, since $\Lambda\subset M\setminus \phc^{W}(p)$.
\end{step}
\begin{step} For each $\eps>0$ for each $x\in M$ $W(x)\transv W^{s}_{\eps}(p)\ne\emptyset$,\newline
This is just applying $f$ to the previous step $k\per(p)$ times.  
\end{step} 
\begin{step}
 For every $\eps>0$ and every $x$, $W(x)$ is $\eps$-dense.\newline
 Let $K>0$ be such that $W_{K}(p)$ be $\frac{\eps}{2}$-dense. Let $\delta>0$ be such that if $d(x,y)<\delta$ then $d_{H}(W_{K}(x),W_{K}(y))<\frac{\eps}{2}$, where $d_{H}$ is the Hausdorff distance. Now, by previous step, $W(x)\transv W^{s}_{\delta}(p)\ne\emptyset$, so there is $y\in W(x)$ such that $d(y,p)<\delta$. This implies that $d_{H}(W_{K}(y), W_{K}(p))<\frac{\eps}{2}$. Therefore, $W(x)\supset W_{K}(y)$ is $\eps$-dense. 
 \end{step}
 Since this holds for all $\eps>0$, $W$ is minimal.
\end{proof}

The following lemma will be also used together with the Minimality Criterion. It follows straightforwardly from \cite[Lemma 3.2]{NH19}
\begin{lemma}\label{lemma.phc.persists}
If $W$ is an $f$-invariant expanding foliation and $p$ is a hyperbolic periodic point with unstable index $u(p)=\dim W$ such that $\phc^{W}(p)=M$, then there exists a $C^{1}$-neighborhood $\U\subset\diff^{1}_{m}(M)$ such that for all $g\in\U$, 
$$\phc^{W_{g}}(p_{g})=M$$
where $W_{g}$ is the continuation of the invariant expanding foliation $W$ and $p_{g}$ is the continuation of the hyperbolic periodic point $p$.
\end{lemma}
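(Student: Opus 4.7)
The plan is to combine openness of transverse intersections with compactness of $M$. For each $x\in M$, the hypothesis $\phc^{W}(p)=M$ provides a point $y_{x}\in W(x)$ where $W(x)$ transversely meets $W^{s}(f^{j(x)}(p))$ for some $j(x)\in\{0,\dots,\per(p)-1\}$. Fix a compact subdisk $D_{x}\subset W(x)$ containing both $x$ and $y_{x}$, together with a compact piece $\Sigma_{x}\subset W^{s}(f^{j(x)}(p))$ containing $y_{x}$, so that $D_{x}\transv \Sigma_{x}$ at $y_{x}$.

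The key ingredients for perturbing this data are the following. First, since $W$ is tangent to an expanding subbundle that admits a dominated splitting (as recorded at the beginning of Section~\ref{section.proof.thm.b}), the subbundle and hence the foliation $W$ admit $C^{1}$-continuous continuations $W_{g}$ on a $C^{1}$-neighborhood of $f$, with compact plaques depending continuously on $g$ in the $C^{1}$-topology. Second, since $p$ is hyperbolic, there is a continuation $p_{g}$ and a compact piece $\Sigma_{x}^{g}\subset W^{s}_{g}(f^{j(x)}_{g}(p_{g}))$ that varies $C^{1}$-continuously with $g$. Because $C^{1}$-transverse intersection of compact submanifolds is an open condition, for each $x$ there exist a neighborhood $V_{x}$ of $x$ in $M$ and a $C^{1}$-neighborhood $\U_{x}$ of $f$ such that for every $g\in\U_{x}$ and every $z\in V_{x}$, the leaf $W_{g}(z)$ contains a compact disk $D_{z}^{g}$ that is $C^{1}$-close to $D_{x}$ and still satisfies $D_{z}^{g}\transv \Sigma_{x}^{g}\neq\emptyset$. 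In particular $z\in\phc^{W_{g}}(p_{g})$.

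To finish, use compactness of $M$: extract a finite subcover $V_{x_{1}},\dots,V_{x_{N}}$ and set $\U:=\bigcap_{i=1}^{N}\U_{x_{i}}$. For every $g\in\U$ and every $z\in M$, some $V_{x_{i}}$ contains $z$, so $z\in\phc^{W_{g}}(p_{g})$; thus $\phc^{W_{g}}(p_{g})=M$. The main technical obstacle, which is exactly what \cite[Lemma 3.2]{NH19} encapsulates, is establishing the uniform $C^{1}$-continuous dependence of compact plaques of $W_{g}$ (and of finite pieces of $W^{s}_{g}(o(p_{g}))$) under $C^{1}$-perturbations of $f$; once that dependence is in hand, the combination of $C^{1}$-openness of transversality and the finite subcover argument is routine.
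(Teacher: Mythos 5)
Your argument is correct and is essentially the intended one: the paper offers no proof beyond the citation of \cite[Lemma 3.2]{NH19}, and your combination of $C^{1}$-continuous dependence of compact plaques of $W_{g}$ and of compact pieces of $W^{s}_{g}(o(p_{g}))$, openness of transversality for complementary-dimensional compact disks, and a finite subcover of $M$ is exactly the standard way that robustness of $\phc^{W}(p)=M$ is established there. You also correctly isolate the only nontrivial ingredient (uniform $C^{1}$-continuity of plaques of the continuation $W_{g}$), which is what the cited lemma supplies, so there is no gap.
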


Let $f\in\diff^{1}_{m}(M)$ be a generic diffeomorphism with an invariant expanding minimal foliation $W$ and a hyperbolic periodic point $p$ such that its unstable index satisfies $u(p)=\dim W$. Let $q$ be a hyperbolic periodic point as in Theorem \ref{teo.jana.acw}.  We always have $u(p)\leq u(q)$, where $u(p)$ and $u(q)$ are the unstable indices of $p$ and $q$, respectively. This is because the tangent bundle of $W$ satisfies $TW\subset E^{+}$, where $E^{+}$ is the subbundle of the zipped Oseledets splitting corresponding to the positive Lyapunov exponents. 
\setcounter{case}{0}
\begin{case} $u(p)=u(q)$.\\
In this case, by Theorem \ref{teo.abdenur.crovisier}, $p$ and $q$ are homoclinically related. This persists in a $C^{1}$-neighborhood $\U$ of $f$. The $\lambda$-lemma then implies that
$\ess{\phc_{g}(p_{g})}=\ess{\phc_{g}(q_{g})}$ for all $g\in \U\cap\diff^{2}_{m}(M)$. \par
Theorem \ref{theorem.B} implies that $\ess{\phc_{g}(p_{g})}=M$ for every $g\in\diff^{2}_{m}(M)\cap \U$, and Lemma \ref{lemma.phc} implies that $\overline{W_{g}(p)}=M$. Then the Minimality Criterion (Proposition \ref{teo.minimality}) applies and from Lemma \ref{lemma.phc.persists} we get that $W_{g}$ is minimal for every $g\in\diff^{2}_{m}(M)\cap \U$.
\end{case}
\begin{case} $u(p)<u(q)$. \\
This case is more sophisticated, and it will need the technology of {\em blenders}. We warn the reader that there are many definitions of blenders in the literature. In \cite{BDV}, Chapter 6, there is a complete presentation on the different ways of defining these objects. We will use an approach similar to the one appearing in \cite{HHTU2010}. \par

Let $p$ be a hyperbolic periodic point of period $n$ such that $Df^{n}(p)$ admits the following invariant splitting in the tangent bundle: $T_{p}M=E^{u}_{p}\oplus E^{c}_{p}\oplus E^{s}_{p}$ such that $\dim E^{c}_{p}=1$, $Df^{n}(p)$ is contracting on $E^{c}_{p}\oplus E^{s}_{p}$ and expanding on $E^{u}_{p}$. Let $B$ be a ball near $p$, but not necessarily containing $p$, so that the splitting $T_{p}M=E^{u}_{p}\oplus E^{c}_{p}\oplus E^{s}_{p}$ has a natural continuation in its tangent bundle. Let $s=\dim E^{s}_{p}$. 
A {\em well-placed} $s$-disc $D^{s}$ is an $s$-dimensional disc centered at a point in $B$ with radius much bigger than the radius of $B$ that is almost tangent to $E^{s}$, i.e. the vectors tangent to $D^{s}$ are $C^1$-close to $E^{s}$. A {\em well-placed $(s+1)$-strip} is any $(s+1)$-disc centered at a point in $B$ containing a well placed $s$-disc that is almost tangent to $E^c \oplus E^s$. 
\begin{definition}[$u$-blender] Let $p$ and $r$ be hyperbolic periodic points such that their unstable indices satisfy $u(r)=u(p)+1$. $Bl^{u}(p)$ is a {\em $u$-blender near $p$ activated by $r$} if $Bl^{u}(p)$ is a ball near $p$ not necessarily containing $p$ so that:
\begin{itemize}
 \item every well placed $(s+1)$-strip in $Bl^{u}(p)$ transversely intersects $W^{u}(p)$
 \item $W^{s}(q)$ contains a well placed $s$-disc in $Bl^{u}(p)$
  \item these properties are $C^{1}$-robust
\end{itemize}
Let $p'$ be a hyperbolic periodic point with the same unstable index as $p$. A small open ball $Bl^{u}(p')$ is a {\em $u$-blender associated with $p'$ activated by $r$} if $Bl^{u}(p')=Bl^{u}(p)$, where $Bl^{u}(p)$ is a $u$-blender near $p$ activated by $r$ and $p$ is homoclinically related to $p'$. See Figure \ref{blender}
\end{definition}

\begin{figure}[h]
\includegraphics[width=1\textwidth]{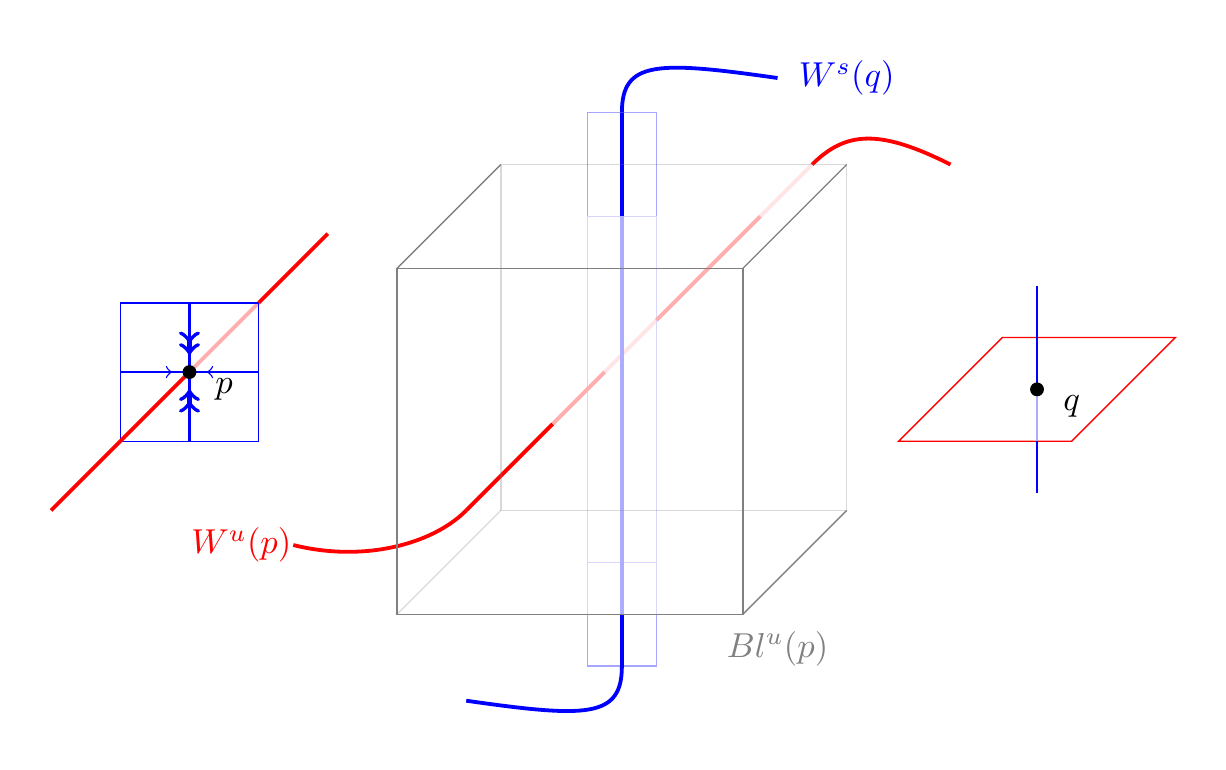}
\caption{\label{blender}A $u$-blender associated to $p$ activated by $r$} 
\end{figure}

We can analogously define {\em $s$-blenders}.\newline\par
Next theorem allows us to obtain $g\in\diff^{1}_{m}(M)$ admitting blenders near some $f\in \diff^{1}_{m}(M)$ with a pair of hyperbolic periodic points with co-index one. It can be found in \cite[Theorem 1.1]{HHTU2010}

\begin{theorem}[Creation of blenders \cite{HHTU2010}] \label{creation.blender}  Let $f \in \diff_m^k(M)$ be such that $f$ has two hyperbolic periodic points $r$ and $p$ of unstable indices $(u+1)$ and $u$ respectly. Then there is $g\in\diff^{k}_{m}(M)$ arbitrarily $C^1$-close to $f$ which admits a $u$-blender associated to the continuation of $p$ activated by the continuation of $r$.\par
In particular, generically in $\diff^{1}_{m}(M)$, for every pair of hyperbolic periodic points $p$ and $r$ with unstable indices $u$ and $(u+1)$ respectively, there exists a $u$-blender $Bl^{u}(p)$ associated to $p$ activated by $r$.
\end{theorem}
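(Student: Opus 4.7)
The plan is to obtain the blender by a two-step perturbation of $f$: first realize a heteroclinic configuration between $p$ and $r$ along their common one-dimensional center direction, and then perform a local conservative $C^1$-perturbation that generates an iterated function system in the center direction whose attractor covers a nontrivial interval. The ``generically'' statement will follow once the existence of blenders under perturbation is established, from the $C^1$-openness of the blender property.

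First I would arrange the infinitesimal data: after an arbitrarily $C^1$-small preparatory conservative perturbation, the derivatives $Df^{\per(p)}(p)$ and $Df^{\per(r)}(r)$ can be assumed to have simple spectrum, so that both points carry one-dimensional center directions $E^c_p$ and $E^c_r$ with $E^c_p$ weakly contracting and $E^c_r$ weakly expanding; this is possible exactly because $u(r)=u(p)+1$. A conservative connecting lemma then produces a further $C^1$-small perturbation (still called $f$) with a quasi-transverse heteroclinic orbit $\Gamma$ from $r$ to $p$: along $E^u_p$ and $\widetilde E^s_r$ the invariant manifolds intersect transversely, while along the center directions they meet quasi-transversely. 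This is the standard co-index-one heteroclinic configuration, and it is the geometric input one unfolds to create blenders.

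Next I would linearize $f$ in a small open ball $U$ near $p$ and use $\Gamma$ to produce two well-separated return branches $\phi_1,\phi_2\colon U\to U$ of a suitably high power of $f$, each contracting along $E^c\oplus E^s$. A local volume-preserving $C^1$-perturbation supported near a single point of $\Gamma$ modifies the projections of $\phi_1$ and $\phi_2$ onto the center coordinate so that they become a pair of contractions on an interval $I\subset E^c$ with ratios bounded away from $1$ whose images cover $I$ with overlap. This is the covering-IFS mechanism of a blender: any well-placed $(s+1)$-strip $D\subset Bl^u(p):=U$ is almost tangent to $E^c\oplus E^s$ and hence projects to a near-horizontal graph over $I$; by the covering property there is a branch $\phi_i^{-1}(D)$ which is again a well-placed $(s+1)$-strip in $U$, and iterating this inward produces a transverse intersection with the local unstable manifold $W^u_{\mathrm{loc}}(p)$. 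The well-placed $s$-disc inside $W^s(r)\cap Bl^u(p)$ required by the definition is obtained directly from the heteroclinic point along $\Gamma$ together with the stable lamination of $r$. All these properties are $C^1$-robust by construction.

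To pass from ``a perturbation $g$ exists'' to the \emph{generic} statement, I would use that Kupka--Smale is residual in $\diff^1_m(M)$, so on a residual set all hyperbolic periodic points of bounded period have well-defined continuations; the blender property is $C^1$-open, so for each fixed pair of periodic orbits of co-index one with periods below a given bound one can intersect the open set of diffeomorphisms for which a blender exists with the Kupka--Smale residual, and a countable intersection over bounds on the period gives the desired residual subset. The main obstacle, and the reason this theorem is attributed to \cite{HHTU2010} rather than being routine, is the volume-preserving constraint: both the connecting-lemma step and the local IFS step must be executed inside the infinite-codimension submanifold $\diff^1_m(M)$, which forces one to use carefully designed Hamiltonian-in-a-chart perturbations so that the prescribed contractions and the heteroclinic configuration are compatible with incompressibility.
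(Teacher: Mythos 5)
The paper offers no proof of this statement at all: it is imported verbatim from \cite{HHTU2010} (Theorem 1.1 there), so your sketch has to be measured against the proof in that reference, which follows the Bonatti--D\'iaz scheme of creating a co-index one \emph{heterodimensional cycle} between $p$ and $r$ and then unfolding it, with all perturbations performed inside $\diff^{1}_{m}(M)$. Your outline is in that spirit, but it has a concrete gap at the central step. You only create \emph{one} heteroclinic connection $\Gamma$, and even its description is dimensionally inconsistent with the paper's notion of quasi-transversality: since $\dim W^{u}(r)+\dim W^{s}(p)=\dim M+1$, any intersection point of $W^{u}(r)$ with $W^{s}(p)$ has tangent spaces sharing a nonzero vector, so the quasi-transverse connection can only be $W^{u}(p)\cap W^{s}(r)$ (dimensions summing to $\dim M-1$), while the connection $W^{u}(r)\transv W^{s}(p)$ is a separate, transverse and robust intersection that must also be produced. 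With only the quasi-transverse orbit, your ``two well-separated return branches $\phi_{1},\phi_{2}\colon U\to U$'' cannot exist: an orbit leaving the linearizing neighborhood $U$ of $p$ along $W^{u}(p)$ can be steered near $r$ through $\Gamma$, but it can re-enter $U$ only if in addition $W^{u}(r)$ meets $W^{s}(p)$. Without that second leg of the cycle the only returns to $U$ are the local ones, whose centre maps all contract toward the centre coordinate of $p$, so no covering IFS on an interval $I\subset E^{c}$ can be generated and the local unstable set remains essentially $W^{u}_{loc}(p)$, which does not meet every well-placed $(s+1)$-strip robustly. The excursion near $r$, where the centre direction is expanding, is exactly what supplies the displaced second branch and the covering property.

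Two further points are asserted rather than argued. First, the applicability of the conservative connecting lemma is not automatic for an arbitrary $f\in\diff^{k}_{m}(M)$: one must explain why the relevant invariant manifolds of $p$ and $r$ can be made to intersect after a $C^{1}$-small conservative perturbation (this uses conservativity through recurrence/transitivity-type arguments, and it must be done twice, once for each leg of the cycle, while keeping $g$ of class $C^{k}$). Second, the step in which a local volume-preserving $C^{1}$-perturbation turns the centre projections of the return branches into a pair of contractions with overlapping images covering $I$ is precisely the technical heart of \cite{HHTU2010}; acknowledging the incompressibility constraint, as you do in your last paragraph, is not a substitute for carrying it out. Your closing genericity argument (openness of the blender property plus a countable intersection organized by period bounds and continuations of hyperbolic orbits) is the standard one and is fine in outline.
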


The following lemma states the main property of $u$-blenders; namely, that the unstable manifold of $p$ ``occupies'' as much space as $W^{u}(r)$. The proof essentially follows \cite[Lemma 6.12]{BDV}, but since their definition of blender is not exactly ours, we include the proof here for completeness.

\begin{lemma}\label{propiedad.blender}
Let $p$ and $r$ be hyperbolic periodic points with unstable indices $u$ and $(u+1)$ respectively, so that $r$ activates a $u$-blender associated to $p$. Then
$$W^{u}(r_{g})\subset \overline{W^{u}(p_{g})}$$
where $p_{g}$ and $r_{g}$ are the continuations of $p$ and $r$, respectively, in a $C^{1}$-neighborhood of $f$.
\end{lemma}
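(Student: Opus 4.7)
The plan is to use both halves of the definition of the $u$-blender: every well-placed $(s+1)$-strip in $Bl^u(p_g)$ transversely meets $W^u(p_g)$, and $W^s(r_g)$ contains a well-placed $s$-disc $D^s\subset Bl^u(p_g)$. Both properties are $C^1$-robust and therefore persist for every $g$ in a $C^1$-neighborhood of $f$.

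The first step is to show that $D^s\subset\overline{W^u(p_g)}$. Given $y\in D^s$ and $\eps>0$, one constructs well-placed $(s+1)$-strips passing $\eps$-close to $y$ using the $C^1$-openness of the well-placed condition, together with the transitivity of the horseshoe carrying the blender (so that $W^u(p_g)$ is dense in the unstable manifold of the whole horseshoe). By the blender property each such strip meets $W^u(p_g)$ transversely in a controlled subregion of $Bl^u(p_g)$; combining these intersections with the density of $W^u(p_g)$ in the horseshoe's unstable manifold yields points of $W^u(p_g)$ converging to $y$, hence $D^s\subset\overline{W^u(p_g)}$.

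The second step extends this inclusion to all of $W^u(r_g)$. Because $D^s\subset W^s(r_g)$ and the closure $\overline{W^u(p_g)}$ is $g$-invariant and closed, the orbit of $r_g$ already lies in $\overline{W^u(p_g)}$. To obtain the whole $W^u(r_g)$, I would show that Step~1 produces not just a single sequence in $W^u(p_g)$ converging to $y$ but a $C^1$-continuous family: varying the strip along the $E^c$-direction (possible by the robustness of the blender) produces a $(u+1)$-dimensional topological disc $\Delta\subset\overline{W^u(p_g)}$ inside $Bl^u(p_g)$ that is almost tangent to $E^u\oplus E^c$, and therefore transverse to $W^s(r_g)$ (tangent to $E^s$ in the blender) at some point $z\in D^s$. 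The $\lambda$-lemma applied at $r_g$ to (a smoothing of) $\Delta$ then gives that $g^n(\Delta)$ accumulates on $W^u_{\mathrm{loc}}(r_g)$ in the $C^0$-topology; the closedness and $g$-invariance of $\overline{W^u(p_g)}$ then force $W^u_{\mathrm{loc}}(r_g)\subset\overline{W^u(p_g)}$, and forward iteration yields $W^u(r_g)\subset\overline{W^u(p_g)}$.

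The main obstacle is the construction of the transverse $(u+1)$-disc $\Delta$ inside $\overline{W^u(p_g)}$: this is the substantive content of the blender mechanism, namely that $W^u(p_g)$ accumulates on $Bl^u(p_g)$ in an essentially $(u+1)$-dimensional way in the $E^u\oplus E^c$ direction. The construction requires either a careful parametric continuity argument for the intersection points produced by the family of well-placed strips, or an alternative approach exploiting the horseshoe structure of the blender, in the spirit of Lemma~6.12 of \cite{BDV} adapted to the definition of blender used in this paper.
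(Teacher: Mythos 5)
There is a genuine gap, and you have in fact named it yourself: the entire content of the lemma is hidden in the construction of the $(u+1)$-dimensional disc $\Delta\subset\overline{W^{u}(p_{g})}$ transverse to $W^{s}(r_{g})$, and your proposal does not carry it out. Without $\Delta$ the forward $\lambda$-lemma at $r_{g}$ has nothing to act on ($W^{u}(p_{g})$ itself has dimension $u$ and is \emph{not} transverse to $W^{s}(r_{g})$, which is precisely why a blender is needed), and even granting $\Delta$ as a merely topological disc, the $\lambda$-lemma is a statement about $C^{1}$ discs: a ``smoothing'' of $\Delta$ need not lie in $\overline{W^{u}(p_{g})}$, so the closedness-and-invariance argument would not apply to it. Your Step 1 is also not justified by the definition used in this paper: a well-placed $(s+1)$-strip must have radius much bigger than the ball $Bl^{u}(p)$ and contain a well-placed $s$-disc, so such strips cannot be shrunk around a prescribed point $y\in D^{s}$, and the blender property only guarantees an intersection with $W^{u}(p_{g})$ \emph{somewhere} in the strip, not near $y$; moreover the ``transitivity of the horseshoe carrying the blender'' and the density of $W^{u}(p_{g})$ in its unstable set are not part of the $u$-blender definition here (the horseshoe appears only in the superblender definition) and are not available.

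The paper's proof runs in the opposite direction and avoids all of this. Fix $x\in W^{u}(r)$ and any neighborhood $U$ of $x$, and set $N=\per(p)\per(r)$. Since $U$ is open and meets $W^{u}(r)$, the $\lambda$-lemma applied to $f^{-1}$ (for which $W^{s}(r)$ is the unstable manifold) shows that $f^{-kN}(U)$ contains discs $C^{1}$-close to the well-placed $s$-disc $D^{s}\subset W^{s}(r)$ for large $k$; being open, $f^{-kN}(U)$ then contains a well-placed $(s+1)$-strip in $Bl^{u}(p)$. The first blender property gives a point of $W^{u}(p)$ in that strip, hence in $f^{-kN}(U)$, and $f^{N}$-invariance of $W^{u}(p)$ yields $W^{u}(p)\cap U\neq\emptyset$, i.e. $x\in\overline{W^{u}(p)}$; $C^{1}$-robustness of the blender transfers this to every nearby $g$. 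If you want to salvage your outline, you should replace both steps by this backward-iteration argument: the only transversality ever used is that of small $s$-discs inside $U$ with $W^{u}(r)$, which is automatic, rather than a transversality of some set built inside $\overline{W^{u}(p_{g})}$, which is exactly what one cannot assume.
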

\begin{proof}
Without loss of generality, we may assume that $Bl^{u}(p)$ is a blender near $p$ associated with $r$. Let $x\in W^{u}(r)$ and consider any neighborhood $U$ of $x$. Take $N=\per(p)\per(r)$. Let $D^{s}$ be the well placed $s$-disc contained in $W^{s}(r)$ as in the definition of $u$-blenders. Due to $\lambda$-lemma, for large $k$, $f^{-kN}(U)$ contains discs as $C^{1}$-close to the disc $D^{s}\subset W^{s}(r)$ as we wish. Since $f^{-kN}(U)$ is open, it contains a well placed $(s+1)$-strip. By the definition of blender $W^{u}(p)$ cuts this strip, and hence $f^{-kN}(U)$. Since $W^{u}(p)$ is $f^{N}$-invariant, $W^{u}(p)\cap U\ne \emptyset$, so the claim is proved. 
\end{proof}

We are also going to apply the Minimality Criterion (Proposition \ref{teo.minimality}) in this case. Since $\phc^{W}(p)=M$, by Lemma \ref{lemma.phc.persists} $\phc^{W_{g}}(p_{g})=M$ for all $g$ in a $C^{1}$-neighborhood $\U$ of $f$, where $p_{g}$ is the continuation of $p$.\par
Remember the following:
\begin{theorem}  \label{teo.jiagang}
For a generic diffeomorphism $f\in \diff^{1}_{m}(M)$, if there are hyperbolic periodic points $p$ and $q$ of indices $u$ and $(u+c_{1})$, respectively, then there is a dense set of hyperbolic points of index $(u+i)$ for each $i=0,\dots, c_{1}$.
\end{theorem}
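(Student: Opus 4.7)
The plan is to combine the generic conservative connecting lemma, the Bonatti--Diaz technology for unfolding co-index-one heterodimensional cycles, and a Baire category argument to get density and genericity simultaneously. By induction on $c_{1}$ it suffices to handle the case $c_{1}=1$: once hyperbolic periodic points of index $u+1$ are produced generically and densely, one iterates the argument pairing such a point with $q$ of index $u+c_{1}$, dropping the co-index by one at each step.

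First I would apply the conservative $C^{1}$-connecting lemma (in the spirit of \cite{AC2012}) to produce, by an arbitrarily small $C^{1}$-perturbation, a co-index-one heterodimensional cycle between the continuations of $p$ and $q$: a transverse intersection $W^{u}(p_{g})\cap W^{s}(q_{g})\ne\emptyset$ and a quasi-transverse intersection $W^{u}(q_{g})\cap W^{s}(p_{g})\ne\emptyset$. Next, the Bonatti--Diaz theory of unfolding such cycles provides, after a further small perturbation, a hyperbolic periodic point of index $u+1$ together with a blender whose activation property is $C^{1}$-robust (in the spirit of Theorem \ref{creation.blender} above). A further application of the connecting lemma allows us to place these intermediate-index periodic orbits inside any prescribed open set $V\subset M$, by dragging a piece of $W^{u}(p)$ into the activating region of a blender sitting in $V$ and invoking Lemma \ref{propiedad.blender} to capture the corresponding periodic orbit inside $V$.

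To pass from these local constructions to a genuine generic statement, fix a countable basis $\{V_{n}\}$ of $M$ and, for each $n$ and each $i\in\{0,\dots,c_{1}\}$, consider
$$\mathcal{G}_{n,i}=\{f\in\diff^{1}_{m}(M): \text{there is a hyperbolic periodic point of index } u+i \text{ in } V_{n}\},$$
which is $C^{1}$-open by the persistence of hyperbolic periodic orbits. The perturbative construction above, together with continuity of continuations, shows that each $\mathcal{G}_{n,i}$ is dense in the open set $\mathcal{O}_{u,c_{1}}\subset\diff^{1}_{m}(M)$ of diffeomorphisms admitting simultaneously hyperbolic periodic points of indices $u$ and $u+c_{1}$. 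Intersecting over all $n$ and $i$ and finally with the Kupka--Smale residual set yields the required generic subset.

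The main obstacle is Step two: producing the co-index-one cycle and its unfolding in the volume-preserving category, and ensuring that the blenders thus obtained can be placed arbitrarily close to any prescribed open set via the conservative connecting lemma. The conservative perturbative machinery (as used in the proof of Theorem \ref{teo.jana.acw} and in \cite{ACW2017}) is what makes all these steps compatible with preservation of $m$, and this is the technical heart of the argument.
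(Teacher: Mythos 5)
First, a point of orientation: the paper does not prove this statement at all --- it is quoted, with the non-conservative version attributed to \cite[Theorem A]{BDPR2000} and the volume-preserving version to \cite[Lemma 3.9]{LSY2012}. Your outline (create a co-index-one heterodimensional cycle with the connecting lemma, unfold it \`a la Bonatti--D\'iaz to get intermediate-index hyperbolic orbits, then run a Baire argument over a countable basis inside the open set $\mathcal{O}_{u,c_{1}}$) is essentially the strategy of those references, so in spirit you are reconstructing the cited proof rather than diverging from the paper.

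There is, however, a concrete gap in the step where you claim density of $\mathcal{G}_{n,i}$, i.e.\ that the intermediate-index orbits can be placed inside an arbitrary prescribed open set $V_{n}$. You propose to ``drag a piece of $W^{u}(p)$ into the activating region of a blender sitting in $V$ and invoke Lemma \ref{propiedad.blender} to capture the corresponding periodic orbit inside $V$.'' Lemma \ref{propiedad.blender} only yields the inclusion $W^{u}(r_{g})\subset\overline{W^{u}(p_{g})}$; it produces no periodic orbit in $V$, and nothing in your construction forces the blender (or the new index-$(u+1)$ orbit created by the unfolding) to sit inside $V_{n}$ --- the unfolding produces orbits near the cycle, wherever that cycle happens to be. The standard repair is different and cheaper: once a single hyperbolic periodic point of index $u+i$ exists, the $C^{1}$-generic conservative fact from \cite{BC} (already used in Proposition \ref{proposition.criterion}) says its homoclinic class is all of $M$, and by Birkhoff--Smale the hyperbolic periodic points homoclinically related to it (hence of the same index $u+i$) are dense in that class; so density in every $V_{n}$ comes from intersecting with the Bonatti--Crovisier residual set, not from blender placement. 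As written, intersecting only with the Kupka--Smale residual does not close this step. A second, smaller caveat: the unfolding of co-index-one cycles must be carried out in $\diff^{1}_{m}(M)$; the original Bonatti--D\'iaz constructions are dissipative, and you correctly flag the conservative versions (in the spirit of \cite{HHTU2010}, \cite{AC2012}, \cite{ACW2017}) as the technical heart, but this is left as a black box rather than argued, so the proposal should be read as a correct high-level plan whose two load-bearing steps (conservative cycle creation and unfolding, and density of the resulting orbits) are delegated to exactly the literature the paper cites.
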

(The following  theorem was originally stated, for generic diffeomorphisms in $\diff^{1}(M)$ in \cite[Theorem A]{BDPR2000}. For the volume preserving case it can be found in \cite[Lemma 3.9]{LSY2012})
By Theorem \ref{teo.jiagang}, there exists a finite sequence of hyperbolic periodic points $p=q_{0},q_{1},\dots,q_{\ell}=q$, such that 
their unstable indices satisfy $u(q_{i+1})=u(q_{i})+1$, for $i=0,\dots,\ell-1$, where $\ell=u(q)$. Theorem \ref{creation.blender} implies that generically there exist $u$-blenders $Bl^{u}(q_{i})$ associated to $q_{i}$ and activated by $q_{i+1}$, for $i=0,\dots,\ell-1$. Lemma \ref{propiedad.blender} then implies:
\begin{equation}\label{eq.cascada.blenders}
 W^{u}(q_{g})\subset \overline{W^{u}(q^{g}_{\ell-1})}\subset\dots\subset\overline{W^{u}(q^{g}_{1})}\subset \overline{W^{u}(p_{g})}
\end{equation}
for all $g$ in a $C^{1}$-neighborhood $\U$ of $f$, where $p_{g}, q_{g}$ and $q^{g}_{i}$ are the continuations, respectively, of $p,q$ and $q_{i}$, for $i=1,\dots,\ell-1$.\par
Theorem \ref{theorem.B} and Lemma \ref{lemma.phc} imply that $W^{u}(q_{g})$ is dense in $M$. The hypothesis of volume preserving is crucial in this step, since it is not known, to the best of our knowledge, how to obtain that $\overline{W^{u}(q_{g})}=M$ robustly in a $C^{1}$-neighborhood of $f$ without it. Lemma \ref{propiedad.blender} now implies that $\overline{W^{u}(p_{g})}=M$.
The Minimality Criterion now applies and we obtain the stable minimality of $W$.
\end{case}
This finishes the proof of Theorem \ref{teo.stable.minimality}.

\section{New examples}\label{section.final}
As we mentioned in the introduction, the known mechanisms to obtain stable minimality require partial hyperbolicity. Theorem \ref{teo.stable.minimality} requires the existence of a generic minimal expanding foliation together with a hyperbolic periodic point with the same unstable index as the dimension of the foliation. However, a generic minimal expanding foliation is not always easy to obtain. The following proposition facilitates a way to obtain generic minimality of the continuation of a foliation in a $C^{1}$ open set. 
\begin{proposition}\label{proposition.criterion} Let $W$ be an expanding invariant foliation for a diffeomorphism $f\in\diff^{1}_{m}(M)$ and $E$ an invariant sub-bundle of $TM$ such that the splitting $TM=TW\oplus E$ is dominated. Let $p$ be a hyperbolic periodic point such that the unstable index $u(p)$ satisfies $u(p)=\dim W$. Let $U$ be an open set such that the $W$-leaf of each point in $U$ transversely intersects the local stable manifold of $p$. If every $W$-leaf intersects $U$ then there exists an open neighborhood $\U\subset \diff^{1}_{m}(M)$ of $f$ such that the continuation $W_{g}$ of $W$ is minimal for a residual set of $g\in\U$. In particular, stable minimality is dense in $\U$.
\end{proposition}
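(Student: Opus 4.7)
The approach is to rerun the proof machinery of Theorems \ref{teo.stable.minimality} and \ref{theorem.B} under the weaker hypothesis $\phc^{W}(p)=M$ in place of minimality of $W$, and to close the argument by applying Theorem \ref{teo.stable.minimality} on the residual subset of $\U$ where $W_{g}$ is minimal. The geometric assumption is just $\phc^{W}(p)=M$ in disguise: every $W$-leaf meets $U$ and in $U$ transversely cuts $W^{s}_{\text{loc}}(p)\subset W^{s}(o(p))$, so $W(x)\pitchfork W^{s}(o(p))\ne\emptyset$ for every $x\in M$. Because the dominated splitting $TM=TW\oplus E$ and the expansion on $TW$ are $C^{1}$-robust, there is an initial $C^{1}$-neighborhood $\U_{0}$ of $f$ on which both the continuation $W_{g}$ and the hyperbolic continuation $p_{g}$ are well defined, with $u(p_{g})=\dim W_{g}=\dim W$. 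Lemma \ref{lemma.phc.persists} then shrinks to a neighborhood $\U\subset\U_{0}$ on which $\phc^{W_{g}}(p_{g})=M$ for every $g\in\U$.

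\textbf{Density of $\overline{W_{g}(p_{g})}=M$.} Next, we restrict to the residual subset of $\U$ on which Theorem \ref{teo.jana.acw} applies; the degenerate alternative of that theorem is ruled out by the expansion on $TW_{g}$, so we obtain a hyperbolic periodic point $q_{g}$ with $\phc_{g}(q_{g})\zeroeq M$ and $u(q_{g})\geq u(p_{g})$. Since $u(p_{g})=\dim W_{g}$ we have $W_{g}(p_{g})=W^{u}(p_{g})$, so it is enough to produce $\overline{W^{u}(p_{g})}=M$. If $u(p_{g})=u(q_{g})$, Theorem \ref{teo.abdenur.crovisier} generically homoclinically relates $p_{g}$ and $q_{g}$, and Lemma \ref{lemma.phc} gives $\overline{W^{u}(p_{g})}=\overline{W^{u}(q_{g})}=\ess{\phc_{g}(q_{g})}=M$. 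If $u(p_{g})<u(q_{g})$, the blender cascade of Case 2 in the proof of Theorem \ref{teo.stable.minimality} takes over: Theorem \ref{teo.jiagang} provides a chain of hyperbolic periodic points of consecutive unstable indices from $p_{g}$ to $q_{g}$, Theorem \ref{creation.blender} places $u$-blenders along the chain, and Lemma \ref{propiedad.blender} yields $\overline{W^{u}(q_{g})}\subset\overline{W^{u}(p_{g})}$, producing $\overline{W^{u}(p_{g})}=M$ again.

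\textbf{Conclusion and main obstacle.} With the three hypotheses of the Minimality Criterion (Proposition \ref{teo.minimality}) secured on the residual subset of $\U$ constructed above, $W_{g}$ is minimal there. Intersecting with the residual subset of $g\in\U$ that meets the generic hypotheses of Theorem \ref{teo.stable.minimality}—which are precisely that $W_{g}$ is minimal and that $p_{g}$ is a hyperbolic periodic point with $u(p_{g})=\dim W_{g}$—we conclude that $g$ is stably minimal on a residual, hence dense, subset of $\U$. The delicate point of the proof is making ``every $W$-leaf meets $U$'' $C^{1}$-robust: it is the compactness argument inside Lemma \ref{lemma.phc.persists} that uniformly bounds the distance along $W$-leaves needed to reach $U$ and thereby turns $\phc^{W}(p)=M$ into a $C^{1}$-open condition; the blender step in the case $u(p_{g})<u(q_{g})$ is technically involved but is only a direct transcription of Case 2 in the proof of Theorem \ref{teo.stable.minimality}, whose workings rely essentially on the volume-preserving setting built into our hypotheses.
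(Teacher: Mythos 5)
Your overall skeleton (hypotheses $\Rightarrow \phc^{W}(p)=M$; persistence via Lemma \ref{lemma.phc.persists}; density of $W_{g}(p_{g})$ for residual $g$; Minimality Criterion; Theorem \ref{teo.stable.minimality} for density of stable minimality) coincides with the paper's, but the middle step is where you diverge, and your route for it has a genuine gap. To get $\overline{W_{g}(p_{g})}=M$ for a \emph{residual} set of $g\in\U$ you re-run the Theorem \ref{theorem.B} machinery: Theorem \ref{teo.jana.acw} indeed gives, $C^{1}$-generically, a point $q_{g}$ with $\phc_{g}(q_{g})\zeroeq M$, but to pass from full measure of $\phc_{g}(q_{g})$ to $\overline{W^{u}(q_{g})}=\ess{\phc_{g}(q_{g})}=M$ you invoke Lemma \ref{lemma.phc}, which is stated and proved (via \cite{AB2012} and Pesin-theoretic $\lambda$-lemma arguments) only for $g\in\diff^{2}_{m}(M)$. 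The residual subset of $\U$ you are working on consists of merely $C^{1}$ diffeomorphisms, and $\diff^{2}_{m}(M)$ is not residual in $\diff^{1}_{m}(M)$, so you cannot intersect your generic conditions with the $C^{2}$ hypothesis and retain a residual set. The same smoothness mismatch affects your co-index case: the blender cascade (Theorem \ref{teo.jiagang}, Theorem \ref{creation.blender}, Lemma \ref{propiedad.blender}) only \emph{transfers} density from $W^{u}(q_{g})$ to $W^{u}(p_{g})$; the density of $W^{u}(q_{g})$ itself again rests on the $C^{2}$-only lemma. As written, your argument yields minimality of $W_{g}$ only on a subset of $\U\cap\diff^{2}_{m}(M)$ with extra generic conditions, which is not known to be residual (nor even generic-in-$\U$) — so the stated conclusion ``$W_{g}$ minimal for a residual set of $g\in\U$'' is not established. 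Salvaging your route would require a $C^{1}$-generic version of Lemma \ref{lemma.phc} (say via the dominated-splitting Pesin theory of \cite{ABC2011}), which neither you nor the paper proves.

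The paper closes this step with a much lighter, purely $C^{1}$-generic input: by Bonatti--Crovisier \cite{BC}, generically in $\diff^{1}_{m}(M)$ the homoclinic class of \emph{every} hyperbolic periodic point is the whole manifold; since the homoclinic class of $p_{g}$ is contained in $\overline{W^{u}(p_{g})}=\overline{W_{g}(p_{g})}$, this gives $\overline{W_{g}(p_{g})}=M$ for a residual set of $g\in\U$ directly, with no appeal to $q_{g}$, to Pesin homoclinic classes, or to blenders. Everything else in your proposal (including the final intersection with the residual set of Theorem \ref{teo.stable.minimality} to get density of stable minimality) matches the paper; the gap is confined to this density-of-$W^{u}(p_{g})$ step, but it is essential.
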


\begin{proof}
 It follows from the hypotheses that $\phc^{W}(p)=M$. We also deduce from the hypotheses that there exists a continuation $W_{g}$ that is an expanding $g$-invariant foliation for all $g$ in a neighborhood $\U$ of $f$. It follows straightforwardly from \cite[Lemma 3.2]{NH19} that $\phc^{W_{g}}(p_{g})=M$, where $p_{g}$ is the continuation of $p$, in a certain neighborhood of $f$ which we continue to call $\U$ (see also Lemma \ref{lemma.phc.persists}). From \cite{BC} we know that generically in $\diff^{1}_{m}(M)$, the homoclinic class of every hyperbolic periodic point is the whole manifold. In particular, $\overline{W_{g}(p_{g})}=M$ for a residual set of $g\in\U$.  The minimality criterion (Proposition \ref{teo.minimality}) then implies that $W_{g}$ is minimal for a residual set of $g\in \U$. Theorem \ref{teo.stable.minimality} then implies that stable minimality is dense.
\end{proof}
 
Proposition \ref{proposition.criterion} does not require partial hyperbolicity.  This criterion can be applied, for instance, to the Ma\~n\'e example \cite{Ma1978}, which can easily be done volume preserving. This provides a non partially hyperbolic stably minimal example. Proposition above also imples the following
\begin{corollary}
Every diffeomorphism with a minimal expanding foliation $W$ such that $TW$ dominates an invariant sub-bundle and has a hyperbolic periodic point with unstable index equal to $\dim W$ can be approximated by diffeomorphisms having stably minimal foliations.  
\end{corollary}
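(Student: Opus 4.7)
The goal is to verify the hypotheses of Proposition \ref{proposition.criterion} for the given $f$, since that proposition already packages stable minimality-density from minimality plus the transversality condition. Thus the entire task is to produce an open set $U\subset M$ such that (a) for every $x\in U$ the leaf $W(x)$ transversely intersects $W^{s}_{\mathrm{loc}}(p)$, and (b) every $W$-leaf meets $U$. Once this is in place, Proposition \ref{proposition.criterion} directly delivers a $C^{1}$-neighborhood $\mathcal U$ of $f$ in which stable minimality is dense, which is exactly the approximation statement we want.

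First I would locate the transversality at the periodic point itself. By hypothesis the splitting $TM=TW\oplus E$ is dominated and $Df|_{TW}$ is expanding. Since $p$ is hyperbolic with unstable index $u(p)=\dim W$, the domination forces $T_{p}W$ to coincide with the unstable direction $E^{u}(p)$ of $p$, and the stable direction $E^{s}(p)$ sits inside $E(p)$. In particular $W^{s}_{\mathrm{loc}}(p)$ is tangent to $E$ at $p$, so $T_{p}W\pitchfork T_{p}W^{s}_{\mathrm{loc}}(p)$ at $p$. By continuity of the bundles $TW$ and $E$, and by continuity of the tangent spaces of $W^{s}_{\mathrm{loc}}(p)$, this transversality persists in an open neighborhood $U$ of $p$: for every $x\in U$, the local $W$-plaque through $x$ is transverse to $W^{s}_{\mathrm{loc}}(p)$, and by shrinking $U$ if necessary it actually intersects $W^{s}_{\mathrm{loc}}(p)$.

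Next, minimality of $W$ gives condition (b) for free: every $W$-leaf is dense, hence meets the open set $U$. Combining (a) and (b), the hypotheses of Proposition \ref{proposition.criterion} are satisfied, so there exists a $C^{1}$-neighborhood $\mathcal U\subset\diff^{1}_{m}(M)$ of $f$ on which stable minimality of the continuation $W_{g}$ is dense. In particular $f$ itself lies in the $C^{1}$-closure of the set of stably minimal diffeomorphisms, which is the conclusion.

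The only subtle step is the transversality argument in the neighborhood $U$, since $W$ is only a continuous foliation with smooth leaves rather than a smooth foliation; however, the leaves are uniformly $C^{1}$ and their tangent spaces vary continuously because $TW$ is a continuous invariant sub-bundle, so the openness of transverse intersection with the smooth submanifold $W^{s}_{\mathrm{loc}}(p)$ goes through without modification. No genericity assumption on $f$ itself is needed at this stage; the residual behavior required by Proposition \ref{proposition.criterion} is extracted from nearby diffeomorphisms, which is exactly why the conclusion is an approximation rather than a statement about $f$ directly.
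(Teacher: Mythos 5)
Your proof is correct and follows the paper's intended route: the paper derives this corollary directly from Proposition \ref{proposition.criterion}, and your construction of the open set $U$ near $p$ (using $T_{p}W=E^{u}(p)$, transversality to $W^{s}_{\mathrm{loc}}(p)$, its persistence by continuity of $TW$, and minimality to guarantee every leaf meets $U$) supplies exactly the details the paper leaves implicit.
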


\subsection{New examples} Using the Proposition \ref{proposition.criterion} above and its corollary we show in this subsection how to build new examples. Consider a volume preserving Anosov $f_{0}$ in the $3$-torus. Let us assume $T\T^{3}=E^{u}\oplus E^{s}$, where $Df|_{E^{u}}$ is expanding and $Df|_{E^{s}}$ is contracting, and $\dim E^{u}=1$.  Let $p$ be an $f_{0}$-fixed point and $q$ an $f_{0}$-periodic point. Let $f$ be a $C^{0}$-perturbation of $f_{0}$ supported in a very small neighborhood $U$ of $q$, not containing $p$, such that $f$ preserves the $W^{s}$-foliation and $f$ admits a dominated splitting of the form $T\T^{3}=E^{u}\oplus E^{cs}$. $f$ is not necessarily Anosov, nor partially hyperbolic. There is plenty of flexibility to obtain examples like this. See for instance \cite{NOH2020}. Observe that $p$ is also an $f$-fixed point. 
\subsubsection{Claim 1. $W^{s}_{f}(p)=W^{s}_{f_{0}}(p)$ } Consider a large disc $D\subset W^{s}_{f_{0}}(p)$ containing $p$ such that $D\cap U=\emptyset$. We want to show that $\bigcup_{n\geq0}f^{-n}(D)=W^{s}_{f_{0}}(p)$. Since $p$ is a hyperbolic fixed point, we know from the Stable manifold theorem that $W^{s}_{f}(p)=\bigcup_{n\geq 0}f^{-n}(D)$ is homeomorphic to a plane. We also know from Franks \cite{franks1970} that there is a semiconjugacy $h$ such that $d(h,id)$ can be made arbitrarily small by taking $U$ arbitrarily small, and $h\circ f=f_{0}\circ h$. Call $r_{n}(f)$, $r_{n}(f_{0})$ the internal radii of $f^{-n}(D)$, $f^{-n}_{0}(h(D))$, respectively. Since $f_{0}$ is Anosov and $h$ is close to the identity map, $r_{n}(f_{0})\to\infty$ as $n\to\infty$. Now, $h\circ f^{-n}(D)= f_{0}^{-n}\circ h(D)$. This implies that $|r_{n}(f)-r_{n}(f_{0})|\leq K<\infty$. Therefore, $r_{n}(f)\to\infty$ as $n\to\infty$, and hence we get the claim.
\subsubsection{Claim 2. $\phc_{f}^{u}(p)=M$} Since $f$ admits a dominated splitting of the form $T\T^{3}=E^{u}\oplus E^{cs}$, where $E^{u}$ is one-dimensional, $E^{u}$ is expanding. This implies there is a foliation transverse to the sub-bundle $E^{cs}$. In particular, since $W^{s}_{f}(p)=W^{s}_{f_{0}}(p)$, 
$W^{s}_{f}(p)$, $W^{s}_{f}(p)$ is dense, and it cuts transversely every leaf of the foliation $W^{u}$ tangent to $E^{u}$. Therefore $\phc_{f}^{u}(p)=M$.\par
Proposition \ref{proposition.criterion} above provides a $C^{1}$-dense set of diffeomorphisms $g$ in a neighborhood $\U$ of $f$ such that for each $g$, the continuation $W_{g}$ of $W^{u}$ is stably minimal. 
\subsection*{Acknowledgements:} We want to thank R. Ures for useful suggestions and discussions that helped improve this paper .

\bibliographystyle{alpha}
\bibliography{2018minimalitytesis}

\end{document}